\newtheorem{theorem}{Theorem}[section]
\newtheorem{proposition}[theorem]{Proposition}
\theoremstyle{definition}
\newtheorem{definition}[theorem]{Definition}
\theoremstyle{remark}
\newtheorem{remark}[theorem]{Remark}
\newcommand{\milano}{Dipartimento di Matematica ``F. Enriques"
	\\ Universit\`a degli Studi di Milano \\ Via Saldini 50 \\ 20133
	Milano, Italy}
\newcommand{\Pin}[1]{{\mathbb P}^{#1}}
\newcommand{\oofp}[2]{{\mathcal O}_{\mathbb{ P}^{#1}}({#2})}
\newcommand{\rk}[1]{{\rm rk}\,(#1)}
\newcommand{\bxj}{\{\mathbf{X}_j\}}
\newcommand{\nbXi}{\mathbf{X}_i}
\newcommand{\nbxj}{\mathbf{X}_j}
\begin{document}
	
	\date{\today}
	\title[Problems and results in A.V. and M.G.]{Problems and related results in Algebraic Vision and  Multiview Geometry}
	\author[M.Bertolini]{Marina Bertolini} 
	\email{marina.bertolini@unimi.it} \thanks{The authors are members
		of GNSAGA of INdAM}

	\author[C.Turrini]{Cristina Turrini}
	
	\email{cristina.turrini@unimi.it}
	
	\address{\milano}

 \dedicatory{to our dear friend Enrique}
	
	\begin{abstract}

\noindent The article presents a survey of results in algebraic vision and multiview geometry. The starting points is the study of projective algebraic varieties critical for scene reconstruction. Initially studied for reconstructing static scenes in three-dimensional spaces, these critical loci are later investigated for dynamic and segmented scenes in higher-dimensional projective spaces.
The formal analysis of the ideals of critical loci employs Grassmann tensors, introduced as crucial tools for determining these ideals and aiding the reconstruction process away from critical loci. A long-term goal of the authors with other co-authors involves two main aspects: firstly studying properties of Grassmann tensors, as rank, multi-rank and core, along with the varieties that parameterize these tensors; concurrently conducting an analysis of families of critical loci in various scenarios.
\end{abstract}
	
	\keywords{Algebraic vision, Multiview Geometry, Critical loci, Determinantal varieties, Grassmann tensors}
	
	
	\maketitle
	

\section{Introduction}

This article is a survey of a number of results in algebraic vision and multiview geometry,
two branches of computer vision that are closely related to projective and algebraic geometry.
These results were obtained over a period of several years, in collaboration with different co-authors. 
	
The strand of research on these topics began with the study of special projective algebraic varieties which arise as critical loci for the reconstruction of a static three-dimensional scene, from a set of its two-dimensional images. Images taken by pinhole cameras can be modeled as linear projections from  $ \mathbb{P}^3 $ to $ \mathbb{P}^2$ and original results involved the reconstruction of a set of points, up to projective transformation of the ambient space, starting from two, three, or four such images. Critical loci of scene points are those for which the projective reconstruction fails, in the sense that there exists a non projectively equivalent set of points and cameras giving the same images (\cite{Hart-Zi2},\cite{Hart-Ka}).
	
Later on, the analysis of certain dynamic and segmented scenes led computer vision experts to consider higher dimensional projective spaces, given that  a dynamic or segmented scene in $ \mathbb{P}^3 $ can be modeled as a static and more manageable scene in a higher dimensional ambient space  $ \mathbb{P}^k , k\ge 3$ (see, e.g. \cite{WS}). Thus, the classical problem of projective reconstruction hes been generalized in the setting of high dimensional projective spaces and the notion of criticality has been naturally extended as well to projections from $ \mathbb{P}^k $ to image spaces of higher dimension, $ \mathbb{P}^h, h\ge 2.$ The resulting generalized critical loci turn out to be very classical and well known projective algebraic varieties, arising as determinantal varieties (\cite{tubbLAIA},\cite{tubbCHAPTER}, \cite{beMagri}, \cite{bnt2}, \cite{bnt1}, \cite{bbnt}).
	
	The study of the ideal of critical loci has been formalized using the so-called Grassmann tensors (or multiview tensors) introduced in \cite{Hart-Schaf}.
	These tensors are defined for a set of projections from $ \mathbb{P}^k $ to $ \mathbb{P}^{h_j}, j=1,\dots,n$. They encode multilinear constraints among view spaces $\mathbb{P}^{h_j},$ obtained considering various relationships among corresponding linear subspaces, i.e. subspaces that contain images of the same scene point in the different target spaces.
	
	These tensors are a key tool not only to determine the ideal of critical loci, but also to carry out the reconstruction process of a scene whose points are far enough from the critical loci. 
	In this framework, one of  the authors' and their coauthors' long term goals is to investigate fundamental properties of Grassmann tensors including various notions of rank, decomposition, degenerations, and identifiability  in higher dimensions, and, when feasible, to study the varieties parametrizing such tensors. Results contained in \cite{BBBT1}, \cite{bebitu}, and \cite{tubbAMPA} are to be framed in this context.

Concerning some bibliographic references, a foundational book on multiview geometry is \cite{Hart-Zi2}. Multiview varieties are not addressed in this paper, but are strictly connected with these arguments (\cite{ah-st-th}, \cite{THP}). For tensors and tensor decomposition we refer to \cite{LA}, and for some applications to \cite{be-car-cat-gi-on}, while for a wide study of the multifocal tensor varieties we refer to \cite{oe1}, \cite{oe2} and \cite{hey1}. A recent survey with updated bibliography on Algebraic Vision is given in \cite{Snapshot}.

\
	
The paper is structured as follows. Assuming no previous knowledge of computer vision, in Section \ref{CV} we recall the setting and standard definitions of multiview geometry and we give some examples to show how a dynamic scene in the three dimensional space can give rise to a static one in a higher dimensional space. In Section \ref{Grass-tens} we introduce Grassmann tensors, following \cite{Hart-Schaf}, and we show their role in the reconstruction process. In Section \ref{rank-core} the focus is on the case of two or three pojections and we study the properties of the corresponding Grassmann tensors, classically known as the bifocal and trifocal tensor, respectively. We determine the rank of the bifocal tensor (which is indeed a matrix), the rank, multilinear rank and the core of the trifocal tensor, under a generality assumption on the centers of projections. This natural assumption allows us to obtain a canonical form for the tensor that greatly simplifies all related computations.
In Section \ref{var-grass} we introduce the variety of Grassmann tensors and we give some results on its birational geometric properties in the bifocal case.

Section \ref{GTCL} opens the second part of the paper, dedicated to the study of critical loci. In particular, in Section \ref{GTCL} we show how the Grassmann tensors can be used to properly define critical loci and to compute their ideals. 
In Sections from \ref{CALIB} to \ref{DEGENERATE} we study critical varieties in different contexts. More precisely, in Section \ref{CALIB} we consider critical loci for the calibration of a camera and in Section \ref{MORE2} critical varieties in the case of at least two projections, when they turn out to be smooth. In Section \ref{BORDIGA} we study the particular case of three projections from $ \mathbb{P}^4 $ to $ \mathbb{P}^2$ which produces a Bordiga surface, connected with a line congruence in $G(1,3)$. In Section \ref{DEGENERATE} we briefly recall what happens in the degenerate cases. In Section \ref{UNIFIED} we introduce and give some results on the unified critical loci and finally in Section \ref{INSTABILITY} we recall the main steps of the practical experiments, which prove how the reconstruction near a critical locus can be unstable.
	
Throughout the paper, proofs are omitted or only briefly sketched, with two exceptions: the explicit construction of the canonical form for the bifocal and trifocal tensors and the definition and properties of critical loci via Grassmann tensors. This is due to the central role that these constructions play in the rest of the paper.

\section{Reconstruction problems}
\label{CV}

One of the most classical, still challenging tasks in Computer Vision, specifically in Multiview Geometry, is the so-called {\it{Structure from Motion}} or {\it{Reconstruction}} problem. In this context, when presented with a set of images (photos or videos) capturing the same scene, whether static or dynamic, the objective is both to reconstruct
\begin{itemize}
    \item  the position and all the data of the cameras that captured the images 
\end{itemize}
and 
\begin{itemize}
    \item  the spatial coordinates of the points in the scene and all additional data such as movement-related information, including velocities and more.
\end{itemize}

The first task is called {\it{calibration}} of the cameras, the second is the actual reconstruction of the scene.

Obviously this problem can be addressed at many different levels, depending on the type of reconstruction you want to obtain: up to homography, up to affinity, or up to similarity. Here we will only deal with the most generic reconstruction, i.e. the projective one, hence we will not take into account all the metric aspects of the set up. 

\subsection{Classical set up: a static scene and, at least, two cameras}

We start recalling the standard set up in multiview geometry.
A {\it{scene}} is a set of N points $X \in \mathbb{P}^3$. A "pinhole" camera is represented as a central projection $P$ of points in $3$-space, from a point $C$ (the {\it{center of projection}}) onto a suitable target plane $\mathbf{P}^2$, which is called {\it{view}}.

After choosing appropriate bases in the vector spaces involved, we can identify
the projection map $P$ with its representative matrix, for which we use the
same symbol $P$. Accordingly, if $X$ is a point in $\mathbb{P}^3$, we denote its image in the projection equivalently as $P(X)$ or $PX$. The line through $C$ and $X$ is a {\it{ray.}} 

\

Of course, since all points of the same ray have the same image, the task of reconstructing the scene is impossible, starting from only one view. The only possible reconstruction is calibration. Moreover, since we are dealing with reconstructions up to projective transformations, the center is the only property of the camera that can be reconstructed.

\ 

Hence, for the reconstruction of the scene, at least two views are needed. 
Points on the views that are images of the same point of the scene are said to be {\it{corresponding points}}.
		
A well known result shows that, if enough pairs of corresponding points, in general position, are known, the reconstruction process is successful. 
The key tool in reconstruction process is a matrix (the {\it{essential matrix}} introduced in 1981 by Christopher Longuet-Higgins, equivalently the {\it{fundamental matrix}}) which encodes the possible constraints that exist between two different images of the same scene.

\subsection{Dynamic scenes}

Experts in computer vision have shown interest in linear projections, not only $P: \mathbb{P}^3 \dashrightarrow \mathbb{P}^2,$ but also between higher dimensional projective spaces. The reason for this lies in the possibility for these projections to model shots of dynamic scenes, such as videos or film sequences.
The idea (due to L. Wolf and A. Shashua \cite{WS}) is that of replacing a dynamic scene that takes place in the $3$-dimensional space with a static scene in a $k$-dimensional space where coordinates represent the initial position of points and the data of the movement (speed, acceleration, ...).

Of course in this context the problem of reconstruction becomes that of recovering all the data of the dynamic scene: the coordinates of the initial points, their speeds, accelerations, and so on.

\

Here is a couple of examples of this approach.

\

\noindent {\bf{$3$-dimensional dynamic scene with constant velocities and parallel trajectories}}

\noindent Assume you have a set of points in the 3D space where each point  $X_j$ moves independently along a straight-line path with constant velocity $\lambda_j$ and such that all the trajectories are parallel to each other. Let $P(t)$ denote the projection matrix from $\mathbb{P}^3$ to $\mathbb{P}^2$ at time $t$ and let $P_j(t)$ be the $j-$th column of $P(t)$. Assume that the common direction of the trajectories is given by the unit vector $(dx_0: dy_0: dz_0)$. If $X_j$ starts at $(x,y,z)^T$ at time $t=0$, then the position of $X_j$ at time $t$ is given by $X_j = (x+t\lambda_jdx_0, y+t\lambda_jdy_0,z+t\lambda_jdz_0,1)$. One can embed
this scene in $\mathbb{P}^4$, with coordinates $(X_1:X_2:X_3:X_4:X_5)^T$ by setting
$X_1 = x; X_2 = y; X_3 = z; X_4 = 1; X_5 = \lambda_j)$.
Denote by $P$ the following $3 \times 5$ matrix
$$P = [P_1(t) | P_2(t) | P_3(t) | P_4(t) | tdx_0 P_1(t) + tdy_0 P_2(t) + t dz_0 P_3(t)].$$
It is immediated to see that one has 
$$P (x,y,z,1,\lambda_j) = P(t) X_j,$$
hence the dynamic scene in $\mathbb{P}^3$ has been trasnformed into a static one in $\mathbb{P}^4.$

   \

\noindent {\bf{$3$-dimensional dynamic scene with constant velocities and no restriction on the trajectories}}

\noindent Assume you have a set of points in the 3D space where each point moves independently along some straight-line path with no restriction on the mutual positions of the trajectories. Let $P(t)$ denote the projection matrix from $\mathbb{P}^3$ to $\mathbb{P}^2$ at time $t$ and let $P_j(t)$ be the $j-$th column of $P(t)$. If point $X_i$ starts at $(x: y: z)$ at time $t = 0$; and its velocity vector is $(dx: dy: dz)$; then the homogeneous coordinates of
point $X_i$ at time $t$ are $X_i = (x+tdx: y +tdy: z +tdz: 1)^T$ . One can embed
this scene in $\mathbb{P}^6$, with coordinates $(X_1:X_2:X_3:X_4:X_5:X_6:X_7)^T$ by setting
$X_1 = x; X_2 = y; X_3 = z; X_4 = 1; X_5 = dx; X_6 = dy; X_7 = dz)$.
The projection matrix from $\mathbb{P}^6$ to $\mathbb{P}^2$ is then the $3 \times 7$ matrix
$$[P_1(t) | P_2(t) | P_3(t) | P_4(t) | tP_1(t) | tP_2(t) | tP_3(t)].$$

\section{Grassmann tensors}
\label{Grass-tens}

So, from now on we will deal with linear projections  $P: \mathbb{P}^k \dashrightarrow \mathbb{P}^h$ defined by $(h+1) \times (k+1)$ full rank matrices. 

\

To proceed further, it is essential to introduce specific notation.

Let $\mathcal{V}$ be a $k+1$-dimensional vector space. We will denote by $\mathbb{P}^k = \mathbb{P}(\mathcal{V})$ the projective space of one-dimensional subspaces of $\mathcal{V}.$
Fix a proper subspace $\mathcal{C} \subset \mathcal{V}$ of dimension $k-h$ ($h < k$), and consider the quotient map $p: \mathcal{V} \rightarrow \mathcal{V}/\mathcal{C}$. The map $p$ induces a rational map $P:\mathbb{P}(\mathcal{V}) \dashrightarrow \mathbb{P}(\mathcal{W})$ where $\mathcal{W} = \mathcal{V}/\mathcal{C}$.

We will call $P$ ${\it{camera}}$ and $C=\mathbb{P}(\mathcal{C})$ {\it{center of projection.}} The target space $\mathbb{P}(\mathcal{W})$ is called {\it{space of rays}}: a point of $\mathbb{P}(\mathcal{W})$ can be identified with a projective $k-h$ linear space containing the center of projection and will be called {\it{ray}}.

As usual $\mathbb{P}({\mathcal{V}}^\star)$ will be identified with the linear space of hyperplanes of $\mathbb{P}(\mathcal{V})$ and $\mathbb{P}({\mathcal{W}}^\star)$ with the subspace od hyperplanes containing $C$. We will call $\mathbb{P}({\mathcal{W}}^\star)$ the {\it{view space}}.

Once bases in $\mathcal{V}$ and in $\mathcal{W}$ are fixed, one obtains a representative matrix for $P$ which we will still denote by $P$. Columns of $P$ generate $\mathcal{W}$, whilst rows of  $P$  generate ${\mathcal{W}}^\star$.

\

We are now ready to introduce Grassmann tensors.
		
Given a set of $n$ linear projections $P_j: \mathbb{P}^k \dashrightarrow \mathbb{P}^{h_j}$, we choose a {\it{profile}} i.e. a $n-$tuple of positive integers $(\alpha_1, \dots \alpha_n)$ such that $\alpha_1 + \dots + \alpha_n = k+1$.

One can choose a linear subspace $L_j \subset \mathbb{P}^{h_j}$  of codimension $\alpha_j$ in the $j-$th view, $j = 1, \dots n$. The subspaces $(L_1, \dots L_n)$ are said to be {\it{corresponding}} if there exist at least one point  $X \in \mathbb{P}^k$ such that $P_j(X) \in L_j, \ \forall j = 1, \dots n$.
						
Let us consider a matrix $S_j$ whose columns represent a basis of $L_j$.
			
The condition $P_j( X) \in L_j$ translates into $P_j X = S_j \mathbf{v_j}$, for some $\mathbf{v_j} \in \mathbb{C}^{h_j-\alpha_j+1}$.
			
In other words, if $(L_1, \dots L_n)$ are corresponding subspaces, the following square homogeneous system has non trivial solution
			
\begin{equation}
\label{syst}
\left[
\begin{array}{ccccc}
P_1 & S_1 & \mathbf{0} & \dots & \mathbf{0} \\
P_2  & \mathbf{0}  & S_2 & \dots & \mathbf{0} \\
\dots & \dots & \dots & \dots & \dots \\
\dots & \dots & \dots & \dots & \dots \\
P_n  & \mathbf{0}  & \mathbf{0} & \dots & S_n \\
\end{array} \right] \cdot \left[
\begin{array}{c}
X \\
-\mathbf{v}_1 \\
-\mathbf{v}_2 \\
\vdots \\
-\mathbf{v}_n\\
\end{array}
\right] = \left[
\begin{array}{c}
\mathbf{0} \\
\mathbf{0} \\
\mathbf{0} \\
\vdots \\
\mathbf{0}\\
\end{array}
\right], 
\end{equation} 
so that $det(T^{P_1, \dots,
P_n}_{S_1, \dots, S_n}) = 0,$			
\quad where $T^{P_1, \dots,
P_n}_{S_1, \dots, S_n} = \left[
\begin{array}{ccccc}
P_1 & S_1 & \mathbf{0} & \dots & \mathbf{0} \\
P_2  & \mathbf{0}  & S_2 & \dots & \mathbf{0} \\
\dots & \dots & \dots & \dots & \dots \\
\dots & \dots & \dots & \dots & \dots \\
P_n  & \mathbf{0}  & \mathbf{0} & \dots & S_n \\
\end{array} \right]  $
			
\

Notice that $T^{P_1, \dots,
P_n}_{S_1, \dots, S_n}$ is an $n-$linear form in the Pl\"ucker coordinates of $L_j'$s, hence it defines a tensor  $\mathcal{T}$ which is called {\it{Grassmann tensor}} defined by the $P_j'$s and the profile $(\alpha_1, \dots, \alpha_n)$ and which, in the sequel, we will also denote by $\mathcal{T}^{(P_1, \dots P_n)}$, or $\mathcal{T}^{(P_1, \dots P_n)}_{(\alpha_1, \dots \alpha_n)}$, when necessary. 

\noindent The entries $\tau_{j_1,\dots,j_n}$  of $\mathcal{T}$ are $(k+1) \times (k+1)-$ minors of $ \left[
				\begin{array}{c}
					P_1  \\
					\vdots  \\
					P_n  \\
				\end{array} \right]$ obtained by choosing $\alpha_j$ rows of $P_j$. Hence, in the decomposition
				
				$$\bigwedge^{k+1}({{\mathcal{W}}_1^\star} \oplus \dots \oplus {{\mathcal{W}}_n^\star}) = \bigoplus_{\sum r_j= k+1} \bigwedge^{r_1}{{\mathcal{W}}_1^\star} \otimes \dots \otimes  \bigwedge^{r_n}{{\mathcal{W}}_n^\star}$$
				
\noindent they correspond to  $\bigwedge^{\alpha_1}{{\mathcal{W}}_1^\star} \otimes \dots \otimes  \bigwedge^{\alpha_n}{{\mathcal{W}}_n^\star}$; in other words $\mathcal{T} \in V_1 \otimes  V_2 \otimes \dots \otimes V_n,$ where $V_j = \bigwedge^{\alpha_j}{{\mathcal{W}}_j^\star}.$

\

These tensors have been introduced by Hartley and Schaffalitsky in (\cite{Hart-Schaf}) and play a key role in Structure-from-motion problems due to the following fundamental result. 

\begin{theorem} \label{fund_result} (see \cite{Hart-Schaf})
If at least one $j =1, \dots,n$ is such that $h_j > 1$, the Grassmann tensor $\mathcal{T}$ determines the projection matrices $P_1, \dots, P_n$, up to constants and up to a projective transformation in $\mathbb{P}^k$.
\end{theorem}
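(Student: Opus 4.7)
The plan is to show that the Grassmann tensor $\mathcal{T}$ completely determines the incidence variety of corresponding $n$-tuples of subspaces, and that this variety in turn determines the projections $P_1, \dots, P_n$ up to the stated ambiguity.

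First I would interpret the content of $\mathcal{T}$ geometrically. By construction of the system \brref{syst}, the vanishing of $\det(T^{P_1,\dots,P_n}_{S_1,\dots,S_n})$ is equivalent to the existence of a nonzero vector $X \in \mathcal{V}$ and vectors $\mathbf{v}_j$ with $P_j X = S_j \mathbf{v}_j$ for every $j$; that is, to the existence of a scene point whose image under each $P_j$ lies in $L_j$. Since the entries of $\mathcal{T}$ are precisely the coefficients of this multilinear form in the Pl\"ucker coordinates of $(L_1,\dots,L_n)$, knowing $\mathcal{T}$ up to a global scalar is equivalent to knowing the incidence variety
\[ Z \;=\; \{\,(L_1,\dots,L_n) \in \textstyle\prod_j G(h_j-\alpha_j,h_j) \;\mid\; \exists\, X \in \mathbb{P}^k \text{ with } P_j(X)\in L_j\ \forall j\,\}. \]

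Next I would recover from $Z$ the rational map $\varphi:\mathbb{P}^k \dashrightarrow \prod_j \mathbb{P}^{h_j}$, $X \mapsto (P_1X,\dots,P_nX)$, together with its projections onto each factor. Fix a generic point $x_1 \in \mathbb{P}^{h_1}$: the sub-family of tuples in $Z$ for which $L_1 \ni x_1$ cuts out, in the remaining factors, exactly the image of the fibre $P_1^{-1}(x_1)$. Iterating over the views reconstructs the image $\varphi(\mathbb{P}^k)$ together with its tautological projections. The assumption that at least one $h_j>1$ is what makes $\varphi$ birational onto its image, so that the scene is identifiable with its coordinatized model inside $\prod_j \mathbb{P}^{h_j}$. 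Given a second set of projections $\{P_j'\}$ yielding the same tensor (up to scalar), one obtains the same incidence $Z$ and the same coordinatized model, hence a birational map $H:\mathbb{P}^k \dashrightarrow \mathbb{P}^k$ satisfying $P_j' H = \lambda_j P_j$ for scalars $\lambda_j$. Since the rows of $(P_1^T|\dots|P_n^T)^T$ span $\mathcal{V}^\star$, the map $H$ pulls back every hyperplane to a hyperplane and therefore lies in $\mathrm{PGL}(k+1)$, yielding the stated conclusion $P_j' = \lambda_j P_j H^{-1}$.

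The main obstacle is the middle step: extracting $\varphi$, and in particular the point-to-point correspondences across the views, from the bare incidence variety $Z$. The role of the hypothesis $h_j>1$ for some $j$ is subtle here; were all $h_j$ equal to $1$, the product of views would have dimension $n$ with no room for a nontrivial image subvariety, and the genericity/fibre arguments used to isolate $\varphi$ from $Z$ would collapse, replacing the $\mathrm{PGL}(k+1)$ ambiguity by a strictly larger family of reconstructions.
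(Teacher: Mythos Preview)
The paper does not prove this theorem at all: it is stated with the attribution ``(see \cite{Hart-Schaf})'' and used as a black box throughout. So there is nothing in the paper to compare your proposal against, and what you have written is an attempt at an independent proof of the Hartley--Schaffalitzky result.

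As such an attempt, the sketch has genuine gaps. You yourself flag the middle step --- recovering the map $\varphi:\mathbb{P}^k\dashrightarrow\prod_j\mathbb{P}^{h_j}$ from the incidence variety $Z$ --- as the main obstacle, and you do not actually carry it out: the ``fix a generic $x_1$ and look at what $Z$ cuts out in the remaining factors'' idea is suggestive but not an argument, since $Z$ is a hypersurface-type condition on tuples of \emph{subspaces} $L_j$ of positive dimension, not on tuples of points, and you would need to explain precisely how the image of $\varphi$ and its fibre structure are read off from such data. More seriously, your explanation of where the hypothesis $h_j>1$ enters is wrong: the map $\varphi$ is birational onto its image as soon as the centers have empty common intersection, regardless of whether any $h_j>1$. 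The failure in the all-$h_j=1$ case is not that $\varphi$ ceases to be birational, but that the tensor $\mathcal{T}\in(\mathbb{C}^2)^{\otimes(k+1)}$ acquires extra automorphisms (beyond those induced by $\mathrm{PGL}(k+1)$ on $\mathbb{P}^k$ and scalars on the views), so distinct non-equivalent sets of projections can yield the same $\mathcal{T}$. A correct proof has to pin down exactly why these spurious automorphisms are excluded once some $h_j\geq 2$, and your argument does not touch this point.
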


\noindent It is useful to notice that if all the target spaces are $\mathbb{P}^1$ the reconstraction is always ambiguous, see for example \cite{Hart-Schaf} and \cite{IMU}.

\
			
\subsection{Classical multiview tensors}
\label{class_ex}

Grassmann tensors generalize to linear projections between spaces of higher dimension some tools which where well-known in the case of projections from $\mathbb{P}^3$ to $\mathbb{P}^2$:

\begin{itemize}
\item the {\it{fundamental matrix}}, in the case of two projections from $\mathbb{P}^3$ to $\mathbb{P}^2$, with profile $ (\alpha_1,\alpha_2) = (2,2)$; 
\item the {\it{trifocal tensor}}, in the case of three projections from $\mathbb{P}^3$ to $\mathbb{P}^2$, with profile $(\alpha_1,\alpha_2,\alpha_3) = (1,1,2)$ ;
\item the {\it{quadrifocal tensor}}, in the case of four projections from $\mathbb{P}^3$ to $\mathbb{P}^2$, with profile $(\alpha_1,\dots,\alpha_4) = (1,1,1,1)$.
\end{itemize}

\

Here we limit ourselves to mentioning the main properties of the fundamental matrix and of that particular geometry (called {\it{epipolar geometry}}) studying correspondences between two views. For further information on this topic and for an in-depth analysis of the case of the trifocal and quadrifocal tensor we refer to \cite{Hart-Zi2}.

\

Consider two linear projections $P_1, P_2: \mathbb{P}^3 \dashrightarrow \mathbb{P}^2,$ and denote by $C_1, C_2 \in \mathbb{P}^3$ the centers of $P_1$ and $P_2$ (resp.). The images $E_{i,j} = P_i(C_J)$ ($i,j = 1,2,  i \neq j$) of the center $C_j$ in the $i-$th view, via the $i-$th projection, are called {\it{epipoles}}.

For a given $\overline{\mathbf{x}}$ in the first view, the set of points in the second view that are corresponding to $\overline{\mathbf{x}}$ is a line 
${\lambda}_{\overline{\mathbf{x}}}$ through the epipole $E_{2,1}$, and similarly the set of points in the first view which are corresponding to a fixed $\overline{\mathbf{x}'}$ is a line ${\lambda}_{\overline{\mathbf{x}'}}$ passing through $E_{1,2}$. Lines ${\lambda}_{\overline{\mathbf{x}}}$ and ${\lambda}_{\overline{\mathbf{x}'}}$ are called {\it{epipolar lines}}.

As we have seen above, the fundamental matrix $F$ is defined by $$ det(M) = det \left[
			\begin{array}{ccc}
				P_1 & \mathbf{x} & \mathbf{0} \\
				P_2  & \mathbf{0}  & \mathbf{x}' \\
			\end{array} \right] = (\mathbf{x}')^t F \mathbf{x}$$

\noindent and  if $\mathbf{x} = P_1(\mathbf{X})$ and $\mathbf{x}' = P_2(\mathbf{X})$ is a pair of corresponding points, then $(\mathbf{x}')^t F \mathbf{x} = 0$. 

In the sequel (see Section \ref{rank-core}) we will deal with the ranks of multiview tensors. In the case of the fundamental matrix it is very easy to see that it is a $3 \times 3$ matrix of rank $2$.
Indeed the epipolar lines ${\lambda}_{\overline{\mathbf{x}}}$ and ${\lambda}_{\overline{\mathbf{x}'}}$ have resp. equations $(\mathbf{x}')^t F \overline{\mathbf{x}} = 0$ and
${\overline{\mathbf{x}'}} F \mathbf{x} = 0$, and the epipoles are the right and left annihilators of $F$. This implies that $\rk{F}=2.$

\

\subsection{Steps for the reconstruction}
\label{reconstr}

We mentioned in the introduction that Grassmann tensors were introduced to perform the reconstruction process in the general context of any number of projections between spaces of any dimension. In this section, we will show the step-by-step solution to the structure-from-motion problem.

We confine our discussion to the theoretical aspects of the question. In the real world, all data is affected by noise, so it is necessary to treat the problem with Numerical Analysis tools, but we will not address this aspect of the question. 

\

Assume we have a set of $n \geq 2$ linear projections $P_j: \mathbb{P}^k \dashrightarrow \mathbb{P}^{h_j}$, $j=1, \dots n$ which are unknown and which project unknown scene points $\mathbf{X} \in \mathbb{P}^k$. Assume also that we can identify some corresponding points in the images. 

We choose a profile $(\alpha_1, \dots \alpha_n)$, with $\alpha_1 + \dots + \alpha_n = k+1$ and the associated Grassmann tensor

$$ det(T^{P_1, \dots,
P_n}_{S_1, \dots, S_n}) = det \left[
				\begin{array}{ccccc}
					P_1 & S_1 & \mathbf{0} & \dots & \mathbf{0} \\
					P_2  & \mathbf{0}  & S_2 & \dots & \mathbf{0} \\
					\dots & \dots & \dots & \dots & \dots \\
					\dots & \dots & \dots & \dots & \dots \\
					P_n  & \mathbf{0}  & \mathbf{0} & \dots & S_n \\
				\end{array} \right].$$

\begin{itemize}
\item [1]  {\bf{reconstruction of the tensor}}:

\noindent For any set $(L_1, \dots L_n)$ of corresponding spaces, one has to compute their Pl\"ucker coordinates and to substitute them in the expression of $det(T^{P_1, \dots,
P_n}_{S_1, \dots, S_n})$. In this way, $det(T^{P_1, \dots,
P_n}_{S_1, \dots, S_n}) = 0$ turns out to be a linear equation in the unknowns entries $\tau_{j_1,\dots,j_n}$ of $\mathcal{T}$. 

If sufficiently many $n-$tuples of corresponding subspaces in the views (and in "enough general" position) are recognized, one gets a linear system from which the entries of the tensor can be obtained. Of course, in the real word case, one has to deal a with overdetermined linear system and to solve it with numerical methods.

\end{itemize}
						
\begin{itemize}
\item [2] {\bf{ reconstruction of the cameras or calibration}}: 
				
\noindent Once the Grassmann tensor $\mathcal{T}$ has been reconstructed, Theorem \ref{fund_result} is used to determine projection matrices, up a projective trasnsformation in $\mathbb{P}^k$ and up to constants.

\item [3] { \bf{reconstruction of the scene}}:

\noindent Finally, once the projection matrices are reconstructed, for each $n-$tuple of corresponding subspaces $(L_1, \dots, L_n)$, one can recover the scene point $\mathbf{X}$ such that $P_j(\mathbf{X}) \in L_j, j=1, \dots, n$, by solving the linear system (\ref{syst}). 
							
\end{itemize}

\section{Ranks and Core of Grassmann tensors}
\label{rank-core}

In this section we recall the well-known notions of rank and multilinear rank of a tensor. For a text that deals extensively with this type of topic, see, e.g. \cite{LA}. We also mention the notion of core of a tensor (see \cite{tu}). Finally we study the ranks and the core in the case of the Grassmann tensor.

\subsection{Ranks of tensors}

Let $Z_j$ be a vector space of dimension $dim Z_j = n_j, \ j=1, \dots h$ and consider a tensor $\mathcal{K} \in Z_1 \otimes  Z_2 \otimes \dots \otimes Z_h.$

One can consider different notions of "rank" for $\mathcal{K}$. The most standard is the following.
			
The {\it{rank}} of $\mathcal{K}$, denoted by rk$({\mathcal{K}}) $ is the minimum number of decomposable tensors $ \mathbf{z}_1 \otimes  \mathbf{z}_2 \dots \otimes \mathbf{z}_h $ ($\mathbf{z}_j \in Z_j$) needed to write $\mathcal{K}$ as a sum.
			
\
			
Another important notion of rank for a tensor, is the so-called multilinear rank. To introduce this notion we have to recall the definition of flattening matrices. We will explain it explicitly in the case of interest to us, i.e. in the case of a three dimensional tensor.

Let $Z_1,Z_2,Z_3$ be vector spaces of dimension $n_1,n_2,n_3,$ with chosen bases $\{\alpha_i\}, \{\beta_j\}, \{\gamma_k\},$ respectively.

Let $\mathcal{K}=[K_{i,j,k}]\in Z_1 \otimes Z_2
\otimes Z_3$. We can interpret $Z_1 \otimes Z_2 \otimes Z_3$ as $Z_1
\otimes (Z_2 \otimes Z_3)$ to get
\begin{equation}
	\label{TA} \mathcal{K}=\sum_{i}\alpha_i \otimes
	(\sum_{j,k}K_{i,j,k}(\beta_j \otimes \gamma_k)).
\end{equation}
The corresponding matrix, of size $n_1 \times (n_2 n_3)$, which
is the flattening $\mathcal{K}_1$, and has the following block
structure:

\begin{equation*}
	{\tiny
		\begin{bmatrix}
			K_{1,1,1}& \dots & K_{1,n_2,1}&K_{1,1,2} & \dots & K_{1,n_2,2}& \dots &K_{1,1,n_3}& \dots & K_{1,n_2,n_3} \\
			K_{2,1,1}  & \dots & K_{2,n_2,1}&K_{2,1,2}& \dots & K_{2,n_2,2}& \dots &T_{2,1,n_3}& \dots & K_{2,n_2,n_3}\\
			\  & \vdots & \ & \   & \vdots &  \ & \dots & \  & \vdots & \\\
			K_{n_1,1,1}  & \dots & K_{n_1,n_2,1}&K_{n_1,1,2}& \dots & K_{n_,n_2,2}& \dots &K_{n_1,1,n_3}& \dots & K_{n_1,n_2,n_3}\\
	\end{bmatrix}}
\end{equation*}
In the same way, paying attention to the cyclic nature of indices $i,j,k,$ one can define flattenings $\mathcal{K}_2$ and
$\mathcal{K}_3.$ One then defines the {\it mutilinear} rank (or  F-rank) of the tensor $\mathcal{K}$ as F-$\rk{\mathcal{K}} =(\rk{\mathcal{K}_1},\rk{\mathcal{K}_2},\rk{\mathcal{K}_3}).$

In the case of an $h$-dimensional tensor $\mathcal{K} \in Z_1 \otimes  Z_2 \otimes \dots \otimes Z_h,$ we proceed in a similar way to construct the $a-$th flattening $\mathcal{K}_a$ ($a=1, \dots n$) which turns out to be a $n_a \times (n_1  \dots \check{n_a} \cdots n_h)$ matrix (where $\check{(-)}$ denotes deleting). One defines the {\it{multilinear rank}} (or {\it{flattening rank}} ) of $\mathcal{K}$ to be F-rk$({\mathcal{K}}) = (r_1,r_2, \dots r_h),$ where $r_a =$rk$({\mathcal{K}}_a)$,  is the rank of the $a-$th flattening matrix ${\mathcal{K}}_a.$  

\

Of course, if $n=2,$ the tensor $\mathcal{K}$ turns out to be a matrix $M$, its flattening are $\mathcal{K}_1 = M$ and $\mathcal{K}_2 = M^T$, so that one has F-rk$({\mathcal{K}}) = (r,r),$ where $r = \rk{{\mathcal{K}}} = \rk{M}.$
\

\begin{remark} 
	\label{invariance} 
For what follows it is worth observing that
	\begin{itemize}
		\item the rank of a tensor $\mathcal{K} \in Z_1 \otimes  Z_2 \otimes \dots \otimes Z_h,$ is invariant under multilinear multiplication by $(M_1,\dots, M_h)$ where $M_a \in GL(n_a), a = 1, \dots, h;$
		\item the rank of ${\mathcal{K}}_a$, $a=1, \dots h,$ is invariant under left action of $GL(n_a)$ and the right action of  $GL(n_{a+1}) \otimes \dots \otimes GL(n_h) \otimes GL(n_1) \otimes \dots \otimes GL(n_{a-1}) ).$
	\end{itemize}
	
\end{remark}
	
\ 
			
Finally we recall the notion of core of a tensor. The core of a tensor is crucial for applications as it represents a smaller tensor that retains all the information of the original tensor.

Let $\mathcal{K} \in Z_1 \otimes  Z_2 \otimes \dots \otimes Z_h$, with $dim Z_j = n_j, \ j=1, \dots h$ be a tensor with multilinear rank F-rk$({\mathcal{K}}) = (r_1,r_2, \dots r_h).$
			
By a {\it{core tensor}} of ${\mathcal K}$  we mean a tensor ${\mathcal C}$ satisfying:
\begin{itemize}
\item[1.] ${\mathcal C} \in W_1 \otimes \dots \otimes W_h$, where $dim W_j = r_j,  \ j=1,\dots h $ ;
\item[2.] there exist $n_j \times r_j$ semi-orthogonal matrices $U_j$, (i.e. $U_j^*U_j = I_{r_j}$, where $(-)^*$ denotes the adiont matrix), such that:  
$$(U^*_1,\dots ,U^*_h) \cdot \mathcal{K} = \mathcal{C};$$
$$(U_1, \dots ,U_h) \cdot {\mathcal C} = \mathcal{K}.$$ 
\end{itemize}

\subsection{Ranks of Grassmann tensors: classical cases}

In this section we study ranks and multilinear ranks of the Grassmann tensors.

First of all, it it is worth noting that the study of ranks of Grassmann tensors, is interesting not only from a teorical point of view, but also for applications. To explain it let's consider the simplest scenario: that of $2$ projections from $\mathbb{P}^3$ to $\mathbb{P}^2$. In this case, the tensor corresponds to the $3 \times 3$ fundamental matrix. The knowledge of  this matrix is a crucial step in reconstructing the scene. However, in practical applications, when the fundamental matrix is reconstructed from a set of pairs of corresponding points, the resulting matrix $\tilde{F}$ is an approximate version of the "true" fundamental matrix $F$ and typically has a rank of $3$
As highlighted in section \ref{class_ex}, $F$ is of rank $2$, so that a refinement strategy involves projecting the reconstructed matrix $\tilde{F}$ onto the variety of matrices of rank $2$.

\

In the classical cases of section \ref{class_ex}, (i.e. for n=2, 3, or 4 projections $ \mathbb{P}^3 \dashrightarrow \mathbb{P}^2$) ranks of Grassmann tensors	are well known, see for example, \cite{Hart-Zi2}, \cite{hey1}:
\begin{itemize}
\item for $n=2$ ({\it{fundamental matrix}}), rk$(\mathcal{T}) =2$;
\item for $n=3$ ({\it{trifocal tensor}}),  rk$(\mathcal{T}) = 4$;
\item for $n=4$ ({\it{quadrifocal tensor}}),  rk$(\mathcal{T}) = 9$.
\end{itemize}
			
\subsection{Bifocal and trifocal tensors: canonical form}

The authors, together with some co-authors, have started to study the ranks of Grassmann tensors for the higher dimensional cases, i.e. for projections $\mathbb{P}^k \dashrightarrow \mathbb{P}^{h_j}, j=1, \dots,n$, in case $n=2$ ({\it{bifocal tensor}}) (\cite{tubbAMPA}) and in case $n=3$ ({\it{trifocal tensor}}) (\cite{BBBT1}). One of the key tool for these results is the reduction of the projection matrices into a particular form which we will call canonical form.

\

We start with the case of $n=3$ projections $\mathbb{P}^k \dashrightarrow \mathbb{P}^{h_j}, j=1,2,3$.
			
Recall that the entries $\tau_{j_1,j_2,j_3}$  of $\mathcal{T}$ are $(k+1) \times (k+1)-$ minors of $ \left[
\begin{array}{c}
	P_1  \\
	\vdots  \\
	P_n  \\
\end{array} \right]$ obtained by choosing $\alpha_j$ rows of $P_j$. Hence a projective transformation in in the scene ${\mathbb{P}}^k = {\mathbb{P}}(\mathcal{V})$ has the only effect of multipliyng all the entries of the tensor by a constant (the determinant of the associated matrix), while a projective trasformation on one view ${\mathbb{P}}^{h_j} = {\mathbb{P}}(\mathcal{W}^*_j)$ induces, through the Pl\"ucker embedding, a projective trasformation on $V_j = \bigwedge^{\alpha_j}{{\mathcal{W}}_j^\star}.$

Therefore, from Remark \ref{invariance}, one easily deduces the following

\begin{remark}
	\label{invar_grass}
	
\

	\begin{itemize}
		\item the right action of $GL(k+1)$ on $P$ doesn't change  $\mathcal{T}$ (up to constants) 
		
		\
		
		\item the left action of $GL(h_1+1) \times GL(h_2+1) \times GL(h_3+1)$ on $P$ doesn't change neither rk$(\mathcal{T})$ nor F-rk$({\mathcal{T}}).$
		
	\end{itemize}
	
\end{remark}

Now we make a {\it{general position assumption}} on the cameras, namely that the span of each pair of centers of projection does'n intersect the third center. 

Denote by $L_1$,
$L_2$ and $L_3$ the vector spaces of dimension $h_1+1$, $h_2+1$
and $h_3+1$ respectively, spanned by the columns of ${P_1}^T$,
${P_2}^T$ and ${P_3}^T$ and by $\Lambda_1=\mathbb{P}(L_1)$,
$\Lambda_2=\mathbb{P}(L_2)$ and $\Lambda_3=\mathbb{P}(L_3)$.

For each triplet of distinct integers $r,s,t=1,2,3,$ we can consider the following integers:

\begin{align}
	&i_{r,s}=h_r+h_s+1-k;\label{numrel1}\\
	&i=h_1+h_2+h_3+1-2k;\label{numrel2} \\
	&j_{r,s}=i_{r,s}-i.\label{numrel3}.
\end{align}

\noindent The general position assumption implies, in particular, that for any
choice of a pair $r,s$, the span of $L_r$ and $L_s$ is the whole
$\mathbb{C}^{k+1},$ (which means that the two centers $C_r$
and $C_s$ do not intersect), moreover, applying Grassmann formula one sees that the three
numbers above have the following geometric meaning: $i_{r,s}= dim (L_r \cap
L_s) \geq 0$, for any choice of $r,s$ , $i= dim (L_1 \cap L_2 \cap
L_3) \geq 0$ and $j_{r,s}$ is the affine dimension of the center
$C_t$ i.e. $k-{h_t}=j_{rs}$ for $r,s,t=1,2,3.$

\noindent Hence one can choose bases for $L_1, L_2$ and $L_3$ so that: 
$$L_1 = <v_1, \dots, v_i, w_{1}, \dots, w_{j_{1,2}}, u_{1}, \dots, u_{j_{1,3}}>,$$
$$L_2 = <v_1, \dots, v_i, w_{1}, \dots, w_{j_{1,2}}, t_{1}, \dots, t_{j_{2,3}}>,$$
$$L_3 = <v_1, \dots, v_i, u_{1}, \dots, u_{j_{1,3}}, t_{1}, \dots, t_{j_{2,3}}>,$$
\noindent and that
$\mathcal{B} = \{v_1, \dots, v_i, w_{1},
	\dots, w_{j_{1,2}}, u_{1}, \dots, u_{j_{1,3}},t_{1}, \dots,
	t_{j_{2,3}}\}$ is a basis of $\check{\mathbb{C}}^{k+1}.$

\noindent Making use of the action of $GL(h_i+1)$ on the views
$i=1,2,3,$ one can transform the columns of ${P_1}^T, {P_2}^T,$ and ${P_3}^T$ so that they become, respectively:
$$[v_1, \dots, v_i, w_{1}, \dots, w_{j_{1,2}}, u_{1}, \dots, u_{j_{1,3}}],$$
$$[v_1, \dots, v_i, w_{1}, \dots, w_{j_{1,2}}, t_{1}, \dots, t_{j_{2,3}}],$$
$$[v_1, \dots, v_i, u_{1}, \dots, u_{j_{1,3}}, t_{1}, \dots, t_{j_{2,3}}].$$

\noindent and finally, via the action of $GL(k+1)$ on the scene, we can reduce $\mathcal{B}$ to the canonical basis.
With these transformations the matrix $[P_1^T | P_2^T | P_3^T]$  takes the following form ({\it{canonical form}}):
\begin{equation}
	\label{canmat_trif}
	[{P_1}_C^T | {P_2}_C^T | {P_3}_C^T]:=\left[%
	\begin{array}{ccc|ccc|ccc}
		I_i & \mathbf{0} & \mathbf{0} & I_i & \mathbf{0} & \mathbf{0} & I_i & \mathbf{0} & \mathbf{0} \\
		\mathbf{0} & I_{j_{1,2}} & \mathbf{0} & \mathbf{0} & I_{j_{1,2}} & \mathbf{0} & \mathbf{0} & \mathbf{0} & \mathbf{0}\\
		\mathbf{0} & \mathbf{0} & I_{j_{1,3}} & \mathbf{0} & \mathbf{0} & \mathbf{0} & \mathbf{0} & I_{j_{1,3}} & \mathbf{0}\\
		\mathbf{0} & \mathbf{0} & \mathbf{0} & \mathbf{0} & \mathbf{0}& I_{j_{2,3}} & \mathbf{0} & \mathbf{0} & I_{j_{2,3}}\\
	\end{array}%
	\right],
\end{equation}
		
\noindent $I_a$ denoting the identity matrix of order $a$ and $\mathbf{0}$ denoting a suitable zero-matrix.

\

In the case of $n=2$ projections $\mathbb{P}^k \dashrightarrow \mathbb{P}^{h_j}, j=1,2$, we have only to assume that the two centers of projection do not meet each other, and, with similar, but easier, arguments we obtain the following {\it{canonical form}} for projection matrices

\begin{equation}
	\label{canmat_bifoc}
	[{P_1}_C^T | {P_2}_C^T]:=\left[%
\begin{array}{cc|cc}
	I_i & \mathbf{0} & I_i & \mathbf{0} \\
	\mathbf{0} & I_{h_1+1-i} & \mathbf{0} & \mathbf{0} \\
	\mathbf{0} & \mathbf{0} & \mathbf{0} & I_{h_2+1-i} \\
\end{array}%
\right].
\end{equation}

\subsection{Bifocal and trifocal tensors: ranks and core in the general case}
\label{rk-core}

Both in the case of the bifocal and in the case of the trifocal tensor we will denote by $\mathcal{T}_C$ the Grassmann tensor corresponding to projection matrices ${P_1}_C$ and ${P_2}_C$ (for $n=2$), ${P_1}_C, {P_2}_C$ and ${P_3}_C$ (for $n=3$, resp.). 

The tensor $\mathcal{T}_C$ has an extremely simple form: all the column vectors of its flattenings are either zero-vectors or, up to a sign, vectors of the canonical bases so that one can reason about $\mathcal{T}_C$ through purely combinatorial arguments.

On the other hand, due to Remark \ref{invar_grass} we have $\rk{\mathcal{T}} = \rk{\mathcal{T}_C},$ so that, by performing some combinatorial computations, one gets the following Theorems \ref{genfundmatteo}  and  \ref{rk_trifoc}.

\begin{theorem} (\cite{tubbAMPA})
	\label{genfundmatteo} The bifocal tensor
	$\mathcal{T}$ for two projections $\mathbb{P}^k \dashrightarrow \mathbb{P}^{h_j}, j=1,2$, whose centers do not intersect each other, with profile 
	$(\alpha_1,\alpha_2),$ has rank:
	$$\rk{\mathcal{T}} =\binom{(h_1-\alpha_1+1)+(h_2-\alpha_2+1)}{h_1-\alpha_1+1}.$$
	
\end{theorem}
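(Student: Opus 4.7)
The plan is to apply Remark \ref{invar_grass} to replace $\mathcal{T}$ by the tensor $\mathcal{T}_C$ built from the canonical projection matrices ${P_1}_C, {P_2}_C$ in (\ref{canmat_bifoc}), and then to compute the rank of the resulting very sparse matrix by direct combinatorial inspection. Set $\beta_j = h_j - \alpha_j + 1$ for $j=1,2$ and $i = h_1 + h_2 + 1 - k$; one readily checks that $\beta_1 + \beta_2 = i$, so the goal becomes to show $\rk{\mathcal{T}_C} = \binom{i}{\beta_1}$.

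Reading the blocks of (\ref{canmat_bifoc}) directly, row $r$ of ${P_1}_C$ is the standard basis vector $e_r \in \mathbb{C}^{k+1}$ for every $r \in \{1,\dots,h_1+1\}$, while row $r$ of ${P_2}_C$ is $e_r$ for $r \leq i$ and $e_{r+h_1+1-i}$ for $i < r \leq h_2+1$. Therefore each entry $\tau_{J_1,J_2}$ of $\mathcal{T}_C$ (with $J_j \subset \{1,\dots,h_j+1\}$ and $|J_j| = \alpha_j$) is the determinant of a $(k+1)\times(k+1)$ matrix whose rows are standard basis vectors, so it equals $\pm 1$ precisely when these rows are a permutation of $e_1,\dots,e_{k+1}$ and vanishes otherwise. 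The basis vectors reachable from ${P_1}_C$ are those indexed by $\{1,\dots,h_1+1\}$, while those reachable from ${P_2}_C$ are indexed by $\{1,\dots,i\}\cup\{h_1+2,\dots,k+1\}$. Hence a nonzero entry forces $\{i+1,\dots,h_1+1\}\subset J_1$ and $\{i+1,\dots,h_2+1\}\subset J_2$, after which $A := J_1 \cap \{1,\dots,i\}$ and $B := J_2 \cap \{1,\dots,i\}$ must form a disjoint partition of $\{1,\dots,i\}$, and a size count forces $|A| = \beta_2$ and $|B| = \beta_1$.

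Finally I would compile the count. Each subset $B \subset \{1,\dots,i\}$ of cardinality $\beta_1$ corresponds to exactly one nonzero entry of $\mathcal{T}_C$ (obtained by taking $A = \{1,\dots,i\}\setminus B$), while conversely each row and each column of $\mathcal{T}_C$ carries at most one nonzero entry, since $J_1$ determines $B$, hence $A$, hence $J_2$, and vice versa. Thus $\mathcal{T}_C$, viewed as a matrix, is a generalized permutation matrix up to padding by zero rows and columns, and its rank equals the number of such $B$:
$$\rk{\mathcal{T}} = \rk{\mathcal{T}_C} = \binom{i}{\beta_1} = \binom{(h_1-\alpha_1+1)+(h_2-\alpha_2+1)}{h_1-\alpha_1+1}.$$
The main technical point is the combinatorial bookkeeping in the second paragraph, distinguishing which standard basis vectors each projection can supply and deducing the forced inclusions on $J_1$ and $J_2$; once the nonzero pattern of $\mathcal{T}_C$ is pinned down, the rank computation is immediate.
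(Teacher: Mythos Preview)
Your proposal is correct and follows precisely the combinatorial route the survey itself outlines just before the theorem: reduce to the canonical form (\ref{canmat_bifoc}) via Remark~\ref{invar_grass}, observe that every row of ${P_1}_C$ and ${P_2}_C$ is a standard basis vector, and count the nonzero entries of $\mathcal{T}_C$. Your bookkeeping on which basis vectors each block can supply, the forced inclusions $\{i+1,\dots,h_1+1\}\subset J_1$ and $\{i+1,\dots,h_2+1\}\subset J_2$, and the partition $A\sqcup B=\{1,\dots,i\}$ with $|B|=\beta_1$ are all accurate, and the observation that each row and each column of $\mathcal{T}_C$ carries at most one nonzero entry cleanly yields $\rk{\mathcal{T}_C}=\binom{i}{\beta_1}$.

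It is worth noting that the paper explicitly remarks, immediately after the theorem, that the original proof in \cite{tubbAMPA} proceeds by a completely different, geometric argument: one interprets the linear map underlying $\mathcal{T}$ as a rational map $\Phi: G(h_1-\alpha_1,h_1) \dashrightarrow G(k-\alpha_1, h_2)$ whose image is a Schubert variety $\Omega$, and recovers the rank as $\dim\langle\Omega\rangle+1$. Your combinatorial argument is more elementary and self-contained, while the Schubert-variety approach gives a conceptual explanation of where the binomial coefficient comes from and does not require first passing to canonical form.
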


\begin{theorem} (\cite{BBBT1})
	\label{rk_trifoc}
	The trifocal tensor $\mathcal{T}$ for three projections  $\mathbb{P}^k \dashrightarrow \mathbb{P}^{h_j}, j=1,2, 3$,  such that the span of any two centers do not intersect the third, with profile $(\alpha_1,\alpha_2,\alpha_3),$ has rank:

	$$ rk(\mathcal{T})=  \sum_{a_2=0}^{j_{12}}\sum_{a_3=0}^{j_{13}}\sum_{b_3=0}^{j_{23}}\binom{j_{12}}{a_2}\binom{j_{13}}{a_3}\binom{j_{23}}{b_3}\binom{i}{\alpha_1-a_2-a_3}\binom{i-\alpha_1+a_2+a_3}{\alpha_2-j_{12}+a_2-b_3},$$				
	
	\noindent where, for $\{r,s,t\} = \{1,2,3\}$ we have put $j_{rs} =k-h_t$.				
\end{theorem}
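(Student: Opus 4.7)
The plan is to exploit the canonical form (\ref{canmat_trif}). First, one applies Remark \ref{invar_grass}: the rank of $\mathcal{T}$ is invariant under both the left $GL(h_1+1)\times GL(h_2+1)\times GL(h_3+1)$ action on the views and the right $GL(k+1)$ action on the scene (the latter only rescales $\mathcal{T}$ by a nonzero constant). Hence $\rk{\mathcal{T}} = \rk{\mathcal{T}_C}$, and one may replace $\mathcal{T}$ by $\mathcal{T}_C$ for the rest of the argument.

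Next, one analyzes the combinatorial structure of $\mathcal{T}_C$. An entry of $\mathcal{T}_C$ is a $(k+1)\times(k+1)$ minor of the stacked matrix $[P_{1,C};P_{2,C};P_{3,C}]$ obtained by selecting $\alpha_j$ rows from the $j$-th block. In canonical form, each such row is either the zero row or a standard basis vector of $\mathbb{C}^{k+1}$ labelled by an element of $\mathcal{B}=\{v_a,w_b,u_c,t_d\}$. Consequently the minor vanishes unless the chosen $k+1$ rows form a permutation of $\mathcal{B}$, in which case it equals $\pm 1$. The block-membership restrictions are: each $v_a$ is available in all three blocks, each $w_b$ only in blocks $1$ and $2$, each $u_c$ only in blocks $1$ and $3$, and each $t_d$ only in blocks $2$ and $3$. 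Let $a_2,a_3,b_3$ denote the number of $w$'s assigned to block $1$, of $u$'s assigned to block $1$, and of $t$'s assigned to block $2$, respectively; the remaining assignments of $w$'s, $u$'s and $t$'s are then forced and contribute $\binom{j_{12}}{a_2}\binom{j_{13}}{a_3}\binom{j_{23}}{b_3}$ independent choices. The row-count constraints $|I_j|=\alpha_j$ additionally force $|V_1|=\alpha_1-a_2-a_3$ and $|V_2|=\alpha_2-j_{12}+a_2-b_3$ (where $V_j$ is the set of $v$'s assigned to block $j$); picking $V_1\subset\{v_1,\dots,v_i\}$ and then $V_2$ in its complement accounts for the remaining two binomials. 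Summing over $(a_2,a_3,b_3)$ produces the statement's expression, which equals the number $N$ of nonzero entries of $\mathcal{T}_C$. Since $\mathcal{T}_C$ is the signed sum of $N$ distinct standard basis tensors $e_{I_1}\otimes e_{I_2}\otimes e_{I_3}$, one obtains $\rk{\mathcal{T}_C}\le N$ for free.

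The matching lower bound comes from the first flattening $(\mathcal{T}_C)_1$, whose columns are indexed by pairs $(I_2,I_3)$ of row subsets of sizes $\alpha_2,\alpha_3$. For such a pair to produce a nonzero column, the basis labels selected by $I_2$ and $I_3$ must be pairwise distinct, and their complement in $\mathcal{B}$ must consist entirely of labels available in $P_{1,C}$; this complement then determines $I_1$ uniquely. Hence every nonzero column of $(\mathcal{T}_C)_1$ carries a single $\pm1$ entry at a distinct row, so the nonzero columns are linearly independent. This gives $\rk{(\mathcal{T}_C)_1}=N$, and since tensor rank is bounded below by the rank of any flattening, $\rk{\mathcal{T}_C}\ge N$. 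Combining with the upper bound finishes the proof.

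The main obstacle is the combinatorial bookkeeping of the second step: one must verify that the parametrization $(a_2,a_3,b_3)$ genuinely decouples the counting and reproduces the binomial product in the stated order, in particular that fixing $|V_1|$ leaves exactly $i-\alpha_1+a_2+a_3$ of the $v$'s available for $V_2$ so as to yield the fifth binomial. The reduction via Remark \ref{invar_grass} is routine, and the lower bound is clean because $(I_2,I_3)$ already determines $I_1$ on the support of $\mathcal{T}_C$\,---\,this injectivity is precisely what prevents any cancellation in the decomposition $\mathcal{T}_C = \sum \pm e_{I_1}\otimes e_{I_2}\otimes e_{I_3}$ and makes the naive count sharp.
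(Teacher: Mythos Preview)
Your reduction to the canonical form via Remark~\ref{invar_grass} is exactly the paper's strategy, and your enumeration of the nonzero entries of $\mathcal{T}_C$ is correct: the parametrization by $(a_2,a_3,b_3)$ and the two remaining binomials for the $v$-assignments do reproduce the stated sum, which is indeed the number $N$ of nonzero entries of $\mathcal{T}_C$. The upper bound $\rk{\mathcal{T}_C}\le N$ follows.

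The lower bound, however, is wrong. Your claim that the nonzero columns of the first flattening sit at \emph{distinct} rows $I_1$ fails: given a nonzero entry at $(I_1,I_2,I_3)$, the pair $(I_2,I_3)$ determines $I_1$, but $I_1$ does not determine $(I_2,I_3)$, because the $v$- and $t$-labels in the complement of $I_1$ are available in \emph{both} blocks $2$ and $3$ and can be redistributed. Concretely, take the classical case $k=3$, $h_1=h_2=h_3=2$, profile $(1,1,2)$: here $i=j_{12}=j_{13}=j_{23}=1$, your count gives $N=4$ (matching the known rank), but the four nonzero entries are
\[
(\{v\},\{w\},\{u,t\}),\ (\{w\},\{v\},\{u,t\}),\ (\{w\},\{t\},\{v,u\}),\ (\{u\},\{w\},\{v,t\}),
\]
so two of the nonzero columns of $(\mathcal{T}_C)_1$ land at the same row $I_1=\{w\}$. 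All three flattenings have rank $3$, not $4$; this is consistent with Theorem~\ref{fl_rk_trifoc}, whose formula for $r_1$ is visibly different from the rank formula. Flattening rank therefore cannot certify the lower bound $\rk{\mathcal{T}_C}\ge N$, and a genuinely different argument (as carried out in \cite{BBBT1}) is needed for that half of the proof.
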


\
The proof of Theorem \ref{genfundmatteo} provided in \cite{tubbAMPA} is derived through a completely different approach, i.e., via a geometric interpretation of a map associated to the matrix $\mathcal{T}$. It turns out to be a rational map $\Phi: G(h_1-\alpha_1,h_1) \dashrightarrow G(k-\alpha_1, h_2)$ whose image is a suitable Schubert
variety $\Omega$, so that $\rk{\mathcal{T}} =dim(<\Omega>)+1,$ $<\Omega>$ denoting the projective space spanned by $\Omega.$

\

The canonical form (\ref{canmat_trif}) of the projection matrices plays also a key role for the computation of the multilinear rank of $\mathcal{T}.$  Indeed, for the tensor in canonical form, one can show that the multilinear rank  is $(r_1,r_2,r_3)$, where $r_j = n_j - \nu_j$, and $\nu_j =$ number of zero rows of $j-$ flattening of $\mathcal{T}_C$.

The result turns out to be the following
			
\begin{theorem} (\cite{BBBT2})
	\label{fl_rk_trifoc}
	The trifocal tensor $\mathcal{T}$ for three projections  $\mathbb{P}^k \dashrightarrow \mathbb{P}^{h_j}, j=1,2, 3$,  such that the span of any two centers do not intersect the third, with profile $(\alpha_1,\alpha_2,\alpha_3),$ has multilinear rank $(r_1,r_2,r_3)$ where
	
	$$r_1 = n_1 -  \sum_{(a_1,a_2,a_3) \in A}\binom{i}{a_1}\binom{j_{1,2}}{a_2}\binom{j_{1,3}}{a_3},$$
	
	\noindent with $A =\{ (a_1,a_2, a_3) \}$ such that $a_u$ are non negative integers satisfying
	$\,\sum_u a_u = \alpha_1, a_1 \le i, \max(0,\alpha_1 -i-j_{1,t})  \leq a_s\leq j_{1,s} -\alpha_s-1, a_t\le j_{1,t}. $
	\noindent and $r_2, r_3$ are obtained similarly to $r_1$ taking into account the cyclic nature of the indexes ${1,2,3}$. 
	
\end{theorem}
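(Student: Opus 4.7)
The plan is to reduce to the canonical form (\ref{canmat_trif}) and then reason purely combinatorially. By the second item of Remark \ref{invar_grass}, replacing each $P_j$ by its canonical-form representative ${P_j}_C$ does not change the multilinear rank of $\mathcal{T}$, so it is enough to compute F-$\rk{\mathcal{T}_C}$. The decisive gain from this reduction is that every column of the block matrix $[{P_1}_C^T \,|\, {P_2}_C^T \,|\, {P_3}_C^T]$ is a standard basis vector $e_s$ of $\mathbb{C}^{k+1}$. Consequently every entry of $\mathcal{T}_C$, being a $(k+1)\times(k+1)$-minor of this matrix obtained by selecting $\alpha_j$ of the columns belonging to ${P_j}_C^T$, is either $0$ or $\pm 1$; it is nonzero precisely when the chosen columns form a permutation of $\{e_1,\dots,e_{k+1}\}$.

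Next I analyse the first flattening $\mathcal{T}_{C,1}$, whose rows are indexed by the subsets $J_1 \subset \{1,\dots,h_1+1\}$ of size $\alpha_1$. Given any column index $(J_2,J_3)$, the subset of columns of ${P_1}_C^T$ needed to complete $(J_2,J_3)$ to a permutation of the standard basis is uniquely determined, because the columns of ${P_1}_C^T$ are pairwise distinct basis vectors. Hence at most one row of $\mathcal{T}_{C,1}$ is nonzero at each column, distinct nonzero rows have disjoint supports, and they are therefore linearly independent. It follows that
\[
r_1 \;=\; \rk{\mathcal{T}_{C,1}} \;=\; n_1 - \nu_1,
\]
in agreement with the observation just preceding the statement, where $\nu_1$ is the number of $J_1$ that admit no completion to a permutation.

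The final task is to count $\nu_1$ combinatorially. Partition the columns of ${P_1}_C^T$ into three blocks of sizes $i$, $j_{1,2}$, $j_{1,3}$, according to whether the corresponding basis vector of $\mathbb{C}^{k+1}$ lies in $L_1\cap L_2\cap L_3$, in the complement of the triple intersection inside $L_1\cap L_2$, or in the complement inside $L_1\cap L_3$. Each $J_1$ has a \emph{type} $(a_1,a_2,a_3)$ recording how many of its elements lie in each block, and the number of $J_1$ with a given type is $\binom{i}{a_1}\binom{j_{1,2}}{a_2}\binom{j_{1,3}}{a_3}$. Completion of $J_1$ amounts to covering the remaining $k+1-\alpha_1$ basis vectors (including those of the $j_{2,3}$-block, which is untouched by $L_1$) by suitable $J_2,J_3$; this is an integer transportation-type problem whose feasibility can be characterised by a short list of inequalities on $(a_1,a_2,a_3)$, and those are exactly the ones defining the set $A$ in the statement. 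Summing the multinomial coefficients over $A$ gives $\nu_1$, and the analogous computations, obtained by cycling the roles of the three projections, yield $r_2$ and $r_3$. The main obstacle is precisely this last feasibility analysis: keeping careful track of the four disjoint blocks of basis vectors and the three different collections of allowed columns, so that the naive list of necessary non-negativity conditions on the entries of $J_2$ and $J_3$ collapses exactly to the inequalities appearing in the definition of $A$.
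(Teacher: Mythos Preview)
Your approach is essentially identical to the paper's: reduce to the canonical form, observe that every column of each flattening of $\mathcal{T}_C$ is either zero or, up to sign, a canonical-basis vector (so nonzero rows have disjoint supports and $r_j = n_j - \nu_j$), and then count the zero rows $\nu_j$ combinatorially via the block decomposition. One small correction: the passage to canonical form uses \emph{both} items of Remark~\ref{invar_grass}, not only the second, since the right $GL(k+1)$-action on the scene space is also required.
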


\

Once again, it is the canonical form that provides us with a method to find the core $\mathcal{C}$  of the trifocal tensor $\mathcal{T}$. Indeed, as mentioned before, the columns of the $j$-th flattening of ${\mathcal T}_C$ are zero or elements of the canonical basis $\{\mathbf{e}_1, \dots ,\mathbf{e}_{n_j} \}$ of $\mathbb{C}^{n_j}$. As a consequence, one can easily show that a core tensor ${\mathcal C}_C$ of ${\mathcal T}_C$ is obtained from ${\mathcal T}_C$ simply by deleting all null faces in each of the three directions, and that the matrices $U_j$ such that 
$$(U^*_1,U^*_2 ,U^*_3) \cdot \mathcal{T}_C = \mathcal{C}_C;$$
$$(U_1,U_2,U_3) \cdot {\mathcal C}_C = \mathcal{T}_C$$ 
are the $n_j \times r_j$ matrices whose columns are the vectors $\mathbf{e}_{k_1}, \dots, \mathbf{e}_{k_{r_j}}, j=1,2,3.$

Now, to obtain the core of ${\mathcal T}$, we use an approach which is essentially the same as HOSVD \cite{del-dem-van} and \cite{vann-vand-mee}, but that turns out to be easier due to the particular form of the involved matrices. First of all one considers the $r_j \times r_j$ invertible matrix $B_j$ defined by $B_j=E_jD_j^{-1}$ where $D_j$ is the diagonal matrix with the singular values of $V_j^{-1}U_j$ and $E_j$ is the matrix whose columns are the eigenvectors of $(V_j^{-1}U_j)^*(V_j^{-1}U_j)$.
Then one defines the core of the tensor ${\mathcal T}$ to be  ${\mathcal C} = (B_1^{-1}, B_2^{-1}, B_3^{-1}) \cdot {\mathcal C}_C$. Finally, one introduces matrices $S_j=V_j^{-1}U_jB_j$ for $j=1,2,3$, which are semi-orthogonal and verify that ${\mathcal C}$ is a core of ${\mathcal T}$, because  ${\mathcal C}=(S_1^*, S_2^*, S_3^*)\cdot {\mathcal T}$ and ${\mathcal T}=(S_1, S_2, S_3)\cdot {\mathcal C}$, since the following diagram commutes:
\begin{equation}
	\xymatrix{
		{\mathcal T}  \ar[rr]^{(V_1, V_2, V_3)}  & & \ar[dd]^{(U_1^T, U_2^T, U_3^T)} {\mathcal T}^c \\
		& \\
		{\mathcal C}  \ar[uu]^{(S_1, S_2, S_3)}  & & {\mathcal C}^c \ar[ll]^{(B^{-1}_1, B^{-1}_2, B^{-1}_3)}
	}
\end{equation}

\subsection{Non-general case}
\label{non-gen}
			
In subsection \ref{rk-core} we have determined the rank, the multilinear rank and the core of the trifocal tensor under a general position assumption on the centers of projection. If this condition is not satisfied, no general statement can be made, since the rank, the multilinear rank (and hence also the core) of the tensor strongly depends on the mutual position of the centers.
			
\
			
Just to give an idea for the rank, consider the case of $3$ projections $\mathbb{P}^{4} \dashrightarrow \mathbb{P}^2.$ When the lines which are centers of projection are in general position, we know ([\ref{rk_trifoc}]) that rk$(\mathcal{T}) = 4,$ but (see \cite{BBBT1}):

\begin{itemize}
\item rk$(\mathcal{T}) = 5,$ when the centers lie in the same hyperplane, or
\item rk$(\mathcal{T}) = 2,$ when the centers span $\mathbb{P}^{4}$, but two of them have nonempty intersection, 
\item rk$(\mathcal{T}) = 4,$ when the centers lie in the same hyperplane and two of them have nonempty intersection.
\end{itemize}

The examples seen above make it clear that the rank can both go up and down when the position of the centers is specialized, hence they provide evidence of the fact that the rank of tensors is not semicontinuous, also in the contest of multiview tensors.
We recall that a tensor $\mathcal{K}$ has {\it border rank} $r$ if it is a limit of tensors of rank $r$ but is not a limit of tensors of rank $s$ for any $s < r$ and that, of course $\underline{R}(\mathcal{K})\le R(\mathcal{K}),$ $\underline{R}(\mathcal{K})$ denoting the border rank of $\mathcal{K}$.  Examples above also show that 
the border rank of a multiview tensor can be strictly less than its rank.

\section{the variety of Grassmann tensors}
\label{var-grass}

In the previous sections, we analyzed the properties of a given Grassmann tensor. In this section, the entries of the cameras will be treated as parameters, and we will explore the algebraic varieties described by the corresponding Grassmann tensors. 

As before, we consider a Grassmann tensors associated to a set of $n$ projections $\mathbb{P}^k \dashrightarrow \mathbb{P}^{h_j}, j=1, \dots,n$, and we fix a profile $(\alpha_1\dots\alpha_n).$
We will denote by $\mathcal{X}^{{(k,h_1 \dots h_n)}}_{(\alpha_1\dots\alpha_n)}$ the variety described by these tensors. Before giving the formal definition of such variety, we recall the state of the art, in the classical case.

\

In case $n=2$, Grassmann tensors are fundamental matrices and a very classical result is the following

\begin{theorem}
\label{var-fund}
The variety of fundamental matrices is
$$\mathcal{X}^{(3,2,2)}_{(2,2)} = \{M\in Mat_{3,3} | det(M) = 0\}, $$
\noindent $Mat_{3,3}$ denoting the space of complex $3 \times 3$ matrices.

\end{theorem}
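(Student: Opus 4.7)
The plan is to prove the two inclusions separately. For the inclusion $\mathcal{X}^{(3,2,2)}_{(2,2)} \subseteq \{M : \det M = 0\}$, I would simply invoke the discussion at the end of Section \ref{class_ex}: the fundamental matrix $F$ associated to a pair $(P_1,P_2)$ annihilates the epipoles on the right and on the left, so $\rk{F}\le 2$ and hence $\det F = 0$. Thus every point of $\mathcal{X}^{(3,2,2)}_{(2,2)}$ lies on the determinantal hypersurface.

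For the reverse inclusion I would use that the hypersurface $\{\det M = 0\}$ is the Zariski closure of its rank-$2$ stratum, so it suffices to realize every rank-$2$ matrix as a fundamental matrix of some pair of cameras. The starting point is the canonical form (\ref{canmat_bifoc}): for $k=3$, $h_1=h_2=2$, centers not meeting forces $i=h_1+h_2+1-k=2$, producing an explicit pair $(P_1^C,P_2^C)$ of $3\times 4$ matrices. A direct computation of the nine $4\times 4$ minors of $\left[\begin{smallmatrix}P_1^C\\ P_2^C\end{smallmatrix}\right]$ (most of which vanish because of repeated or zero columns) shows that the associated fundamental matrix $F^C$ is, up to signs and a permutation of indices, a matrix with a single nonzero $2\times 2$ identity block, hence of rank exactly $2$.

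Next I would exploit the left action of $GL(h_1+1)\times GL(h_2+1)=GL(3)\times GL(3)$ on the pair $(P_1,P_2)$. By Remark \ref{invar_grass}, this action preserves the tensorial meaning of the Grassmann tensor; unwinding the definition of the entries as $4\times 4$ minors, replacing $P_j$ by $A_jP_j$ induces the standard two-sided action $F\mapsto (\det A_1)(\det A_2)\, (A_1^{-1})^{T} F\, (A_2^{-1})$ (up to a global scalar) of $GL(3)\times GL(3)$ on $\mathrm{Mat}_{3,3}$. Since this action is transitive on each rank stratum, the orbit of $F^C$ fills the entire rank-$2$ locus, and every rank-$2$ matrix is therefore a fundamental matrix. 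Taking Zariski closure absorbs the rank-$1$ and rank-$0$ loci (which correspond to degenerate configurations such as $C_1=C_2$ excluded by our general position hypothesis), yielding $\{\det M=0\}\subseteq \mathcal{X}^{(3,2,2)}_{(2,2)}$.

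The main technical point, and the one where care is needed, is checking that the $GL(3)\times GL(3)$ action on $(P_1,P_2)$ really induces the full two-sided action on $3\times 3$ matrices rather than some proper subgroup or a twisted action: the identification involves transposes and inverses that must be tracked through the definition of the tensor via $\bigwedge^{\alpha_j}\mathcal{W}_j^\star$. Once this verification is in place the proof reduces to the transitivity of $GL(3)\times GL(3)$ on rank strata, a standard fact from linear algebra. The step from the rank-$2$ stratum to its closure is merely a formality since $\mathcal{X}^{(3,2,2)}_{(2,2)}$ is defined as a (closed) algebraic variety.
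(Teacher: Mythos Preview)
Your argument is correct, but it differs from the paper's. The paper treats Theorem~\ref{var-fund} as classical and does not prove it directly; however, it indicates how the generalization (item a) in Section~\ref{var-grass}) is proved: one uses the dimension formula~(\ref{dim}), which in this case gives $\dim \mathcal{X}^{(3,2,2)}_{(2,2)} = (3+1)(2+2+2-3-1)-2+1 = 7$, together with the inclusion $\mathcal{X}^{(3,2,2)}_{(2,2)} \subseteq \{\det M = 0\}$ (from $\rk F = 2$) and the irreducibility of the determinantal hypersurface in $\mathbb{P}^8$, to conclude equality by a dimension count.

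Your route is instead a transitivity argument: you exhibit one rank-$2$ fundamental matrix via the canonical form~(\ref{canmat_bifoc}) and then sweep out the whole rank-$2$ stratum by the left $GL(3)\times GL(3)$ action on $(P_1,P_2)$. This is more elementary and more explicit---it actually produces, for each rank-$2$ matrix, a pair of cameras realizing it---whereas the paper's argument is shorter but relies on the birational map $\Pi$ underlying formula~(\ref{dim}) and on knowing the irreducibility of $\{\det M = 0\}$. One minor remark: the precise form of the induced action you wrote, $F\mapsto (\det A_1)(\det A_2)(A_1^{-1})^T F A_2^{-1}$, has the sides of $A_1$ and $A_2$ depending on the convention for which view indexes the rows of $F$; you correctly flag this as the point requiring care, and once it is checked the transitivity on rank strata is indeed standard.
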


Recently, some authors have dealt with the study of varieties of Grassmann tensors in the context of $3$ or $4$ projections $\mathbb{P}^3 \dashrightarrow \mathbb{P}^2$. Their focus has primarily been on determining the ideals of these varieties.

In particular 

\begin{itemize}
	\item $n=3$, A. Alzati and A. Tortora \cite{al-to2} have determined the equations of the {\it{the trifocal variety}} $\mathcal{X}^{(3,2,2,2)}_{(1,1,2)}$, and  C. Aholt and L.Oeding \cite{oe1} have determined its ideal and computed its Hilbert polynomial,
	
	\item $n=4$, L.Oeding \cite{oe2} has computed the ideal of the {\it{the quadrifocal variety}} $\mathcal{X}^{(3,2,2,2,2)}_{(1,1,1,1)}$ up to degree $8$ and partially in degree $9$.
\end{itemize}  
			
\
	
The authors, in collaboration with some co-authors (\cite{tubbAMPA},\cite{bebitu},\cite{bebitu2}) have begun a research primarily focused on  determining the geometric and birational properties of the variety of Grassmann tensors $\mathcal{X}^{{(k,h_1 \dots h_n)}}_{(\alpha_1\dots\alpha_n)}$ for projections between spaces of any dimension.

As we have seen in subsection (\ref{Grass-tens}) for a Grassmann tensor $\mathcal{T}$ we know that ist entries are some of the Pl\"ucker coordinates of the linear space represented by the matrix $P^T = [P_1^T \dots  P_n^T]$ and that $\mathcal{T} \in V_1 \otimes  V_2 \otimes \dots \otimes V_n,$ where $V_j = \bigwedge^{\alpha_j}{{\mathcal{W}}_j^\star},$ 

Because of this, it is natural to consider the Grassmannian $G(k, {\mathbb{P}}({{\mathcal{W}}_1^\star} \oplus \dots \oplus {{\mathcal{W}}_n^\star}))$  and the projection 
$$\pi: {\mathbb{P}}(\bigwedge^{k+1}({{\mathcal{W}}_1^\star} \oplus \dots \oplus {{\mathcal{W}}_n^\star})) \dashrightarrow {\mathbb{P}}(\bigwedge^{\alpha_1}{{\mathcal{W}}_1^\star} \otimes \dots \otimes \bigwedge^{\alpha_2}{{\mathcal{W}}_2^\star}).$$
			
By definition, the variety $\mathcal{X}^{{(k,h_1 \dots h_n)}}_{(\alpha_1\dots\alpha_n)}$ of Grassmann tensors for $n$ projections $\mathbb{P}^k \dashrightarrow \mathbb{P}^{h_j}, j=1, \dots,n$ with profile $(\alpha_1\dots\alpha_n)$ is
$$ \mathcal{X}^{{(k,h_1 \dots h_n)}}_{(\alpha_1\dots\alpha_n)} = {\overline{\pi(G(k, {\mathbb{P}}({{\mathcal{W}}_1^\star} \oplus \dots \oplus {{\mathcal{W}}_n^\star})))}},$$
\noindent $\overline{(\ )}$ denoting the Zariski closure.
			
By Theorem (\ref{fund_result}), $\pi$ induces a birational map
			
$$\Pi: \frac{G(k, {\mathbb{P}}({{\mathcal{W}}_1^\star} \oplus \dots \oplus {{\mathcal{W}}_n^\star}))}{({\mathbb{C}}^*\oplus \dots \oplus{\mathbb{C}}^*)/{\mathbb{C}}^*} \dashrightarrow \mathcal{X}^{{(k,h_1 \dots h_n)}}_{(\alpha_1\dots\alpha_n)}.$$
			
In particular, one has
\begin{equation}
	\label{dim} dim(\mathcal{X}^{(k,h_1 \dots h_n)}_{(\alpha_1\dots\alpha_n)}) = (k+1)(h_1+h_2 + \dots + h_n +n-k-1)-n +1.
\end{equation}

Notice that the dimension of $\mathcal{X}^{(k,h_1 \dots h_n)}_{(\alpha_1\dots\alpha_n)}$ is independent of the profile $(\alpha_1, \dots,\alpha_n)$: different profile only correspond to different projections of the Grassmannian.
				
\

In \cite{tubbAMPA}, \cite{bebitu}, the authors focus on the case $n=2$ and, in order to give a geometric description of the variety $ {\mathcal{X}}_{\alpha_1,\alpha_2}$ of  {\it{bifocal Grassmann tensors}} they prove the following results

\begin{itemize}
    \item [a)] ${\mathcal{X}}^{(k,h_1, h_2)}_{(h_1,h_2)} =  \{M\in Mat_{h_1+1,h_2+1} \ | \ rk(M) = 2\}$ (see \cite{tubbAMPA}).
    \item [b)] $ {\mathcal{X}}^{(k,h_1, h_2)}_{(\alpha_1,\alpha_2)}$ is birational to a homogeneous space with respect to the action of $GL(h_1+1) \times GL(h_2+1)$ (see \cite{bebitu}).
    \item [c)] ${\mathcal{X}}^{(k,h_1, h_2)}_{(\alpha_1,\alpha_2)}$ is endowed with a dominant rational map  
$\Psi:  {\mathcal{X}}_{\alpha_1,\alpha_2} \dashrightarrow G(i-1,{\mathbb{P}})({\mathcal{V}^*}))$ with fibres isomorphic to $PGL(i)$ (see \cite{bebitu}).

\end{itemize}

Part a) provides a generalization of Theorem \ref{var-fund}. Its proof follows easily from formula (\ref{dim}).

The key tool for the proof of part b) is again the canonical form. Indeed in the space of $ (h_1+h_2+2)\times (k+1)$ matrices one can consider the open subset $Z$ of matrices $P =  \left[
\begin{array}{c}
P_1  \\
P_2  \\
\end{array} \right]$, with $P, P_1, P_2$ of maximal rank and where $P_j$ has size $(h_j+1) \times (k+1), j = 1,2$ and notice that, as we have seen in Section \ref{rank-core} every $P \in  Z$ can be put in the canonical form (\ref{canmat_bifoc}).

The proof of part c) is much more technical and for a comprehensive understanding, we refer to \cite{bebitu}.

\section {Grassmann Tensors and critical loci}
\label{GTCL}
		
		As discussed in Section \ref{Grass-tens}, given 
		$n$ projections of a scene in $\mathbb{P}^{k}$ and a suitable number of corresponding points in the images $\mathbb{P}^{h_i}$ of the scene, it becomes possible first to recover the entries of a suitable Grassmann tensor $\mathcal{T}$, then the projection matrices, and finally the coordinates of the scene points. 
  
        Hence, in principle, achieving a successful reconstruction is possible given a sufficient number of views and a a sufficient number of corresponding points.
        This holds true in general, but it is possible to find two sets of scene points (non projectively equivalent) and  cameras that produce the same images in the views. This scenario, which even occurs in the classical setup of two projections from $\Pin{3}$ to $\Pin{2}$, prevents any possible reconstruction. 

        Such configurations of points in $\Pin{k}$ are referred to as {\it critical} and the loci they describe as {\it critical loci}.

To formalize the notion of critical configuration and loci, let us
suppose to have $n$ projections of a static scene, consisting
of a set of $N >> 0$ points $\bxj$ in $\Pin{k}$. The projected points on the image spaces are denoted with $\mathbf{x}_{ij}=P_i(\nbxj).$ 
In all this section we will work under the:
\bigskip

{\em First generality assumption}: we assume that the intersection of the centers $ C_1 \cap \dots \cap C_n $ is empty.
\medskip

\begin{definition}
\label{critconfm} Given $ n $ projections $ Q_i: \mathbb{P}^k \dashrightarrow \mathbb{P}^{h_i}$,
a set of points $\{ \mathbf{X}_1, \dots, \mathbf{X}_N\}$
in $\Pin{k}$ is said to be a \textit{critical
configuration} for projective reconstruction for $ Q_1, \dots, Q_n$ if
there exists another set of $ n $ projections $P_i : \mathbb{P}^k \dashrightarrow \mathbb{P}^{h_i} $
and another set
$\{\mathbf{Y}_1, \dots, \mathbf{Y}_N \} \subset \Pin{k}$,
non-projectively equivalent to $ \{ \mathbf{X}_1, \dots,
\mathbf{X}_N \},$ such that, for all $i = 1, \dots, n$ and $j = 1,
\dots, N$, we have $P_i(\mathbf{Y}_j) = Q_i(\mathbf{X}_j)$, up to
homography in the targets. The two sets $\bxj$ and
$\{\mathbf{Y}_j\}$ are called {\it conjugate critical
configurations}, with {\it associated conjugate} projections (or {\it associated couple} of $n$ projections)
$\{Q_i\}$ and $\{P_i\}$ 
\end{definition}

According to notations in Section \ref{CV}, the $n$ projections are identified by $n$ matrices $P_i$, $i=1,..., n$, of dimension $(h_i+1)\times (k+1)$ and maximal rank. 

The points in critical configurations fill an algebraic variety, called {\em critical locus}  $\mathcal{X}$, whose ideal can be obtained by making use of the Grassmann tensor introduced in Section \ref{Grass-tens}:

\begin{proposition} \label{prop-2-1}
The critical locus $ \mathcal{X} $ is an algebraic variety, whose ideal $ I(\mathcal{X})$ is generated by the maximal minors of the matrix \begin{equation} \label{matrix-M}  M^{P_1, \dots,
P_n}_{Q_1, \dots, Q_n} = \left( \begin{array}{ccccccc} P_1 &
Q_1(\mathbf{X}) & 0 & 0 & \dots & 0 & 0 \\ P_2 & 0 &
Q_2(\mathbf{X}) & 0 & \dots & 0 & 0 \\ \vdots & & & & & & \vdots
\\ P_n & 0 & 0 & 0 & \dots & 0 & Q_n(\mathbf{X}) \end{array}
\right).\end{equation} Then $\mathcal{X}$ is a determinantal variety,
containing the centers of the projections $ Q_j$'s.
\end{proposition}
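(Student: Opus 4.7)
The plan is to reduce the geometric definition of a critical configuration to a rank condition on the matrix $M = M^{P_1,\dots,P_n}_{Q_1,\dots,Q_n}$, and then read the ideal off from that condition. First I would translate Definition \ref{critconfm} into linear algebra: absorbing the ``up to homography in the target'' ambiguity into the alternative projections $P_i$, a point $\mathbf{X}$ lies in $\mathcal{X}$ precisely when there exist $\mathbf{Y}\in\Pin{k}$ and scalars $\lambda_1,\dots,\lambda_n$ with $P_i\mathbf{Y} = \lambda_i Q_i(\mathbf{X})$ for every $i = 1,\dots,n$. Rewriting each of these as $P_i\mathbf{Y} - \lambda_i Q_i(\mathbf{X}) = \mathbf{0}$ and stacking the $n$ equations in a single block matrix gives exactly
$$M \cdot (\mathbf{Y},\, -\lambda_1,\, \dots,\, -\lambda_n)^T = \mathbf{0}.$$

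Second, I would interpret the existence of a nontrivial solution of this homogeneous system as a determinantal condition. The matrix $M$ has $k+1+n$ columns and $\sum_i(h_i+1)$ rows; in the regime where reconstruction from the $Q_i$ is meaningful, the latter exceeds the former. Hence a nontrivial kernel vector exists if and only if $\mathrm{rank}(M) < k+1+n$, equivalently, every $(k+1+n)\times(k+1+n)$ minor of $M$ vanishes. This identifies $\mathcal{X}$ (scheme-theoretically) with the determinantal variety cut out by the maximal minors of $M$, and therefore realizes $I(\mathcal{X})$ as the corresponding determinantal ideal. The containment of the centers is then immediate: if $\mathbf{X}\in C_j$, then by definition of $C_j$ one has $Q_j(\mathbf{X})=\mathbf{0}$, so the column of $M$ hosting $Q_j(\mathbf{X})$ is identically zero and every maximal minor of $M$ automatically vanishes, placing $\mathbf{X}$ in $\mathcal{X}$.

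The delicate point is the second step. One must ensure that kernel elements of $M$ correspond to genuine critical configurations and not to geometric artefacts such as $\mathbf{Y}=\mathbf{0}$ or all $\lambda_i = 0$. The \emph{first generality assumption} $C_1\cap\cdots\cap C_n=\emptyset$ enters precisely here: it rules out a kernel direction with $\mathbf{Y}\neq\mathbf{0}$ and all $\lambda_i=0$, since such a direction would force $\mathbf{Y}$ to lie in the common center of the alternative projections $P_i$, which is chosen so that no such common point exists. A further subtlety, which the survey style of the paper glosses over, is the passage from a set-theoretic description to the full equality of ideals: one expects, and verifies in the cases of interest, that the determinantal ideal generated by the maximal minors of $M$ is already radical and thus coincides with $I(\mathcal{X})$.
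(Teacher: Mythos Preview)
Your argument is correct, but it follows a different route from the paper's own proof. You work directly from Definition~\ref{critconfm}: a point $\mathbf{X}$ is critical exactly when the linear system $P_i\mathbf{Y}=\lambda_i Q_i(\mathbf{X})$ has a nontrivial solution $(\mathbf{Y},\lambda_1,\dots,\lambda_n)$, which is the rank--drop condition on $M$. The paper instead derives the same ideal through the Grassmann tensor machinery of Section~\ref{Grass-tens}: since the images $Q_i(\mathbf{X})$ of a critical point are also corresponding for the $P_i$'s, any choice of subspaces $L_i\ni Q_i(\mathbf{X})$ forces $\det T^{P_1,\dots,P_n}_{S_1,\dots,S_n}=0$; taking $S_i=\bigl(Q_i(\mathbf{X})\,\vert\,S'_i\bigr)$ and Laplace--expanding, this determinant becomes a sum of products of maximal minors of the $S'_i$ with maximal minors of $M$, and letting the profile $(\alpha_1,\dots,\alpha_n)$ vary recovers \emph{all} maximal minors of $M$ as generators.

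Your approach is more elementary and self--contained, and it makes the determinantal nature of $\mathcal{X}$ transparent as a straightforward kernel condition. The paper's approach, by contrast, is deliberately woven into the tensor narrative: it exhibits the generators of $I(\mathcal{X})$ as the coefficients one obtains by specialising the Grassmann tensor at $Q_i(\mathbf{X})$, which is precisely the link exploited later (e.g.\ in the reconstruction and instability discussions) and which justifies the section title ``Grassmann tensors and critical loci''. Both arguments share the same residual looseness about degenerate kernel vectors and about radicality of the determinantal ideal, which the survey treatment does not resolve in either version.
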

\begin{proof}

Let $ L_j \subseteq \mathbb{P}^{h_j} $ be corresponding linear subspaces, of codimensions $ \alpha_i $ for $ i=1, \dots, n$, and $(\alpha_1, \dots, \alpha_n)$ the corresponding profile, as defined in Section \ref{Grass-tens}. 
We have seen that the Grassmann tensor $\mathcal{T}^{P_1,\dots,P_n}(L_1,
\dots, L_n)$ encodes the algebraic relations between corresponding
subspaces in the different views of the projections
$P_1,\dots,P_n$. Hence by definition of critical set, if
$\{\mathbf{X}_j,\mathbf{Y}_j\}$ are conjugate critical
configurations, then, for each $j$, the projections
$Q_1(\mathbf{X}_j), \dots, Q_n(\mathbf{X}_j)$ are corresponding
points not only for the projections $Q_1, \dots,Q_n,$ but for the
projections $P_1, \dots,P_n$, too. 

We choose a point $ \mathbf{X} $ in the critical locus.
If $ Q_i(\mathbf{X}) \in L_i$, for every $ i=1,
\dots, n$, then $\mathcal{T}^{P_1,\dots,P_n}(L_1, \dots, L_n) =
0$. The previous condition is fulfilled if $ L_i $ is spanned by $
Q_i(\mathbf{X}) $ and any other $ h_i-\alpha_i $ independent
points in $ \mathbb{P}^{h_i}$. So, we can suppose $$ S_i = \left(
\begin{array}{cccc} Q_i(\mathbf{X}) & \mathbf{x}_{i1} & \dots &
\mathbf{x}_{i,h_i-\alpha_i} \end{array} \right) = \left(
\begin{array}{cc} Q_i(\mathbf{X}) & S'_i \end{array} \right) $$
of maximal rank $ h_i-\alpha_i+1$, that is to say, $ S'_i $
is a general $ (h_i+1) \times (h_i-\alpha_i) $ matrix of rank $
h_i-\alpha_i$. Due to this choice, the matrix $ T^{P_1, \dots,
P_n}_{S_1, \dots, S_n} $ becomes
$$ T^{P_1, \dots,
P_n}_{S_1, \dots, S_n} = \left( \begin{array}{cccccccccc} P_1 &
Q_1(\mathbf{X}) & S'_1 & 0 & 0 & 0 & \dots & 0 & 0 & 0 \\ P_2 & 0
& 0 & Q_2(\mathbf{X}) & S'_2 & 0 & \dots & 0 & 0 & 0 \\ \vdots & &
& & & & & & & \vdots \\ P_n & 0 & 0 & 0 & 0 & 0 & \dots & 0 &
Q_n(\mathbf{X}) & S'_n \end{array} \right).$$ The determinant $
\det(T^{P_1,\dots, P_n}_{S_1, \dots, S_n}) $ is a sum of products of
maximal minors of $ S'_1, \dots, S'_n$, and
maximal minors of the matrix \begin{equation} M^{P_1, \dots,
P_n}_{Q_1, \dots, Q_n} = \left( \begin{array}{ccccccc} P_1 &
Q_1(\mathbf{X}) & 0 & 0 & \dots & 0 & 0 \\ P_2 & 0 &
Q_2(\mathbf{X}) & 0 & \dots & 0 & 0 \\ \vdots & & & & & & \vdots
\\ P_n & 0 & 0 & 0 & \dots & 0 & Q_n(\mathbf{X}) \end{array}
\right).\end{equation}
Such a matrix is
a $ (n+ \displaystyle \sum_{i=1}^n h_i) \times (n+k+1) $ matrix,
the last $n$ columns of which are of linear forms, while the first
$ k+1 $ columns are of constants.

If we allow the profile to change, as $ \mathbf{X} $ is in
the critical locus independently from the profile, we get all the
possible maximal minors of $ M^{P_1, \dots, P_n}_{Q_1, \dots,
Q_n}$. 

To conclude the proof we only have to show that the center $ C'_j $ of $ Q_j $ is
contained in $ \mathcal{X}$ for every $ j=1, \dots, n$. $ C'_j $
is the zero locus of $ Q_j(\mathbf{X})$, and so $ M^{P_1, \dots,
P_n}_{Q_1, \dots, Q_n} $ drops rank at every point in $ C'_j$, for
each $ j$.
\end{proof}

\

We remark that, if $ \alpha_i \geq 1$, then some maximal minors of
$ M^{P_1, \dots, P_n}_{Q_1, \dots, Q_n} $ do not appear in $
\det(T^{P_1,\dots, P_n}_{S_1, \dots, S_n}) $ for whatever profile, and so
they should not be among the generators of $ I(\mathcal{X})$.  For this reason, we allow $ \alpha_j $ to be equal to $ 0 $ for some $ j$.
If this happens, the associated view does not impose any constrain
to the reconstruction problem, and so the effect of setting $
\alpha_j = 0 $ is to decrease the number of views.

\

In the following proposition we simplify the matrix used to generate the ideal $ I(\mathcal{X})$. In particular:
\begin{proposition}
\label{N-mat-cor}
$ I(\mathcal{X}) $ is generated by the maximal minors of a matrix
of linear forms and the expected dimension of $ \mathcal{X} $ is
\begin{equation} \label{exp-dim}
ed_{\mathcal{X}} = k - \left(1 + (n -k-1+ \sum_{i=1}^n h_i) -
n\right) = 2k - \sum_{i=1}^n h_i.
\end{equation}
Moreover, if $ \dim(\mathcal{X})
= ed_{\mathcal{X}}$,
\begin{equation} \label{exp-deg} \deg(\mathcal{X}) =
\binom{n-k-1+\sum_{i=1}^n h_i}{n-1}.
\end{equation} \bigskip
\end{proposition}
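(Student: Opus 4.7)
The strategy is to reduce $M^{P_1,\dots,P_n}_{Q_1,\dots,Q_n}$ from Proposition \ref{prop-2-1} to a matrix of linear forms via constant invertible row operations, and then invoke the standard codimension-and-degree theory for varieties cut out by maximal minors. Recall that $M$ has size $(n+\sum_i h_i)\times(n+k+1)$: its first $k+1$ columns form the stacked constant matrix $\tilde{P}=\bigl(\begin{smallmatrix}P_1\\ \vdots\\ P_n\end{smallmatrix}\bigr)$, while its last $n$ columns carry the linear forms $Q_j(\mathbf{X})$ block-diagonally. The first generality assumption $C_1\cap\dots\cap C_n=\emptyset$ forces $\tilde{P}$ to have full rank $k+1$, so we can pick a constant invertible matrix $R$ of size $(n+\sum_i h_i)\times(n+\sum_i h_i)$ with $R\tilde{P}=\bigl(\begin{smallmatrix}I_{k+1}\\ 0\end{smallmatrix}\bigr)$.

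Multiplying $M$ on the left by $R$ scales the ideal of maximal minors only by a unit, and brings the matrix to the block form
$$RM=\begin{pmatrix} I_{k+1} & A \\ 0 & N \end{pmatrix},$$
where $A$ is a $(k+1)\times n$ matrix of linear forms and $N$ is a $(n+\sum_i h_i-k-1)\times n$ matrix of linear forms. The key point is that every maximal minor of $RM$ equals, up to sign, an $n\times n$ minor of $N$. Indeed, suppose one omits $s\ge 1$ of the identity rows when selecting the $n+k+1$ rows of a maximal minor: then the first $k+1$ columns of the resulting $(n+k+1)\times(n+k+1)$ square block vanish in all $n+s$ bottom rows, so those $k+1$ column vectors lie in a subspace of $\mathbb{C}^{n+k+1}$ of dimension at most $k+1-s<k+1$. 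Being $k+1$ vectors in a space of strictly smaller dimension, they are linearly dependent and the minor vanishes. Thus only the choices that retain all rows of $I_{k+1}$ contribute, and Laplace expansion along the identity block identifies each surviving minor with an $n\times n$ minor of $N$. This proves the first assertion: $I(\mathcal{X})$ is generated by the maximal minors of a matrix of linear forms, namely $N$.

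The dimension and degree now follow from classical generic-determinantal theory. For a $p\times n$ matrix of general linear forms on $\mathbb{P}^k$ with $p\ge n$, the locus of non-maximal rank has expected codimension $p-n+1$; plugging in $p=n+\sum_i h_i-k-1$ gives expected codimension $\sum_i h_i-k$, hence expected dimension $2k-\sum_i h_i$, as in \eqref{exp-dim}. When this expected codimension is attained, the Eagon--Northcott/Porteous formula gives the degree of the maximal-minor locus as $\binom{p}{n-1}$, which with the same substitution yields $\binom{n-k-1+\sum_i h_i}{n-1}$, matching \eqref{exp-deg}.

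The delicate point in this plan is the combinatorial vanishing argument of the second paragraph: one must rule out contributions from every pattern of row selections that breaks the identity block, and the cleanest way to do so is the rank-count performed above. After that, everything reduces to quoting standard results about generic determinantal varieties of maximal minors, which apply since $N$ behaves like a general matrix of linear forms under the first generality assumption.
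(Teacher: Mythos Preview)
Your argument is correct and follows essentially the same route as the paper: reduce $M$ by elementary operations exploiting that the stacked constant block has full rank $k+1$, identify the surviving maximal minors with the $n\times n$ minors of a $(n+\sum h_i-k-1)\times n$ matrix of linear forms, and then read off expected codimension and degree from standard determinantal/Porteous theory. The only difference is cosmetic---you use row operations alone and keep the block $\begin{pmatrix} I & A \\ 0 & N\end{pmatrix}$, while the paper also performs column operations to reach $\begin{pmatrix} 0 & N_{\mathcal X} \\ I & 0\end{pmatrix}$---but the resulting ideal is the same.

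One small caveat: your closing sentence that ``$N$ behaves like a general matrix of linear forms under the first generality assumption'' overstates what that assumption buys. The first generality assumption only guarantees $\mathrm{rk}(\tilde P)=k+1$, not that $N$ is generic; the paper introduces a separate second generality assumption precisely to force $\dim\mathcal{X}=ed_{\mathcal X}$. This does not damage your proof, since the proposition only asserts the \emph{expected} dimension and states the degree conditionally on that dimension being attained, which is exactly the hypothesis under which Porteous applies.
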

\begin{proof}	
From the first generality assumption, it follows
both that the first $ k+1 $ columns of $ M^{P_1, \dots, P_n}_{Q_1,
\dots, Q_n}$ are linearly independent, and that the linear forms
in the last $ n $ columns of the above matrix span a linear space
of dimension $ k+1 $ in $ R = {\mathbb  C}[x_0,\dots,
x_k]$, where $ \mathbb{P}^k = \mbox{Proj}(R)$. In fact, no point is common to either the
centers of the $ P_i$'s or of the $ Q_j$'s. \bigskip  

As in \cite{bnt1}, we write the matrix $ M^{P_1, \dots, P_n}_{Q_1, \dots, Q_n}$ as
the following block matrix $$ M^{P_1, \dots, P_n}_{Q_1, \dots,
Q_n} = \left( \begin{array}{cc} A & B \\ C & D \end{array} \right)
$$ where $ A $ is a $ (n -k-1+ \displaystyle \sum_{i=1}^n h_i) \times
(k+1)$ matrix, $ B $ is a $ (n -k-1+ \displaystyle \sum_{i=1}^n
h_i) \times n$ matrix, $ C $ is an order $ (k+1) $ square matrix,
and, finally, $ D $ is a $ (k+1) \times n $ matrix.

Due to the first generality assumption, $ \left( \begin{array}{c} A \\ C \end{array} \right) $ has rank $ k+1$. Hence, it contains an invertible square submatrix of order $ k+1$. Up to row exchanges, we can assume that the last $ k+1 $ rows give us the required invertible matrix, so we will assume $ C $ to be invertible.

By performing elementary operations on
columns and rows, we can reduce $ M^{P_1, \dots, P_n}_{Q_1, \dots,
Q_n} $ to the following easier form $$ \left(
\begin{array}{cc} 0 & N_{\mathcal{X}} \\ I_{k+1} & 0 \end{array} \right)
$$ where $ N_{\mathcal{X}} = B - AC^{-1}D $ is a $ (n -k-1+ \displaystyle \sum_{i=1}^n
h_i) \times n$ matrix of linear forms. Furthermore, the maximal
minors of $ M^{P_1, \dots, P_n}_{Q_1, \dots, Q_n}$ span the same
ideal as the maximal minors of $ N_{\mathcal{X}}$. Hence, we have
that $ I(\mathcal{X}) $ is generated by the maximal minors of $
N_{\mathcal{X}} = B - A C^{-1} D$. 
Since, as proven above, the critical locus $ \mathcal{X} $ is a determinantal
variety whose ideal is generated by the maximal minors of a matrix
of linear forms, the expected dimension of $ \mathcal{X} $ is
obtained from standard computations of the dimension of determinantal varieties. Moreover if $ \dim(\mathcal{X})
= ed_{\mathcal{X}}$,
from Porteous's formula
(\cite{acgh}, formula $4.2$), we get its degree.
\end{proof}
\bigskip

In the following sections we will give an overview on the state of art about the results on critical loci, according to the dimensions of the scene spaces and to the number of views. For these results we need to introduce the: 
\bigskip

{\em Second generality assumption:} we assume the projections 
$ \{P_i\}$ and $ \{Q_i\}$ to be general enough to guarantee that the critical locus $ \mathcal{X} $ has the expected dimension $ ed_{\mathcal{X}}=2k - \sum_{i=1}^n h_i$.
\bigskip 

Hence, from now on, every time we assert we are {\it in the general case}, we assume that both the generality assumptions hold. \medskip

\section{The case of one view from $\Pin{k}$ to $\Pin{2}$ }
\label{CALIB}

The calibration process of a single projection, whether in the classical case of a static scene in $\mathbb{P}^3$ or in its generalization to a dynamic scene in $\mathbb{P}^k$, equivalent to reconstructing the center of the projection, can be ambiguous. 
Hence we define the critical set for the calibration from $\mathbb{P}^k$ to $\mathbb{P}^h$ as
the set of points $\{\mathbf{X_i}\} \in \mathbb{P}^k$ having the same projection from two distinct centers. 

In this case, we we do not yet require the tensor machinery introduced in Section \ref{GTCL}. 

In the classical case of a single projection from $\mathbb{P}^3$ to $\mathbb{P}^2$, the critical locus has been proven to be a non-degenerate twisted rational cubic curve in $\mathbb{P}^3$ (see for example \cite{Hart-Zi2}, \cite{Hart-Ka}, \cite{Buch-92} and \cite{Buch}).

\

The generalization to the critical loci arising in the
reconstruction from a single view from $\Pin{k}, k \geq 4,$ to $\Pin{h}$ can be obtained as follows. Assume that $\{\mathbf{X_i}\}$ is a critical configuration for projective reconstruction from $1$-view from $\mathbb{P}^k$ to $\mathbb{P}^h$ with $h \geq 2$, as above, with associated projection matrices $P$ and $Q.$ As $P \cdot \mathbf{X_i} = \mu Q \cdot \mathbf{X_i},$ the associated critical locus is the determinantal variety defined by the $ 2 \times 2 $ minors of the $(h+1) \times 2$ - matrix $$ \left( \begin{array}{ccc} P(\mathbf{X}) & \vert & Q(\mathbf{X}) \end{array} \right).$$
Hence, if the critical locus is irreducible and reduced, it is a minimal degree variety in $\Pin{k}$ classified in \cite{Eisenbud-Harris}. 

In the case $h=2$, these critical loci are bundles $\mathbb{P}(E)$ over $\Pin{1},$ where $E =\oofp{1}{a_1} \oplus \oofp{1}{a_2}\oplus \dots \oplus \oofp{1}{a_{k-2}}$ is a rank $k-2$ vector bundle over $\Pin{1},$ $a_i \ge 0,$ $\sum_i a_i = 3,$ and $\mathbb{P}(E)$ is embedded by the tautological line bundle. In \cite{tubbCHAPTER}, the author presents a computer vision interpretation. If $k=4$ or $k=5$, the variety can either be smooth, if the null-spaces of $P$ and $Q$, that is, the centers of the two projections, do not intersect, or cones over a cubic surface with center $V$, if the two null spaces intersect at $V$. If $k \geq 6$ the centers of the two projections given by $P$ and $Q$ always intersect in a linear subspace $V$, hence all the null-spaces contain $V$. This implies that the variety is never smooth, but a cone with vertex $V$ over a suitable three dimensional variety.

\section{The case of $n \geq 2$ views from $\Pin{k}$ to $\Pin{h_i}$}
\label{MORE2}

An exhaustive analysis of critical loci in $\Pin{3}$
is presented in \cite{Hart-Ka}. The classical result (due to Kramer) regarding the criticality of a quadric surface in the case of $2$-views, is considered and generalized in the case of multiple views, to subvarieties obtained as suitable intersections of sets of critical quadrics. 

As the dimension $k$ of the ambient space increases, partial results, in the direction of determining the ideal of the critical variety for projections onto $\Pin{2}$ are presented in  \cite{tubbLAIA}, where a general framework for studying critical loci is proposed.

A complete classification of the {\it smooth} determinantal varieties which are critical loci for $n$ projections from $\Pin{k}$ to $\Pin{h_i}$ is contained in \cite{bnt2}. Moreover particular attention has been given to the case of three projections from $\Pin{4}$ to $\Pin{2},$ in which a Bordiga surface is obtained as essential component of the critical locus.

The results obtained in the case {\it smooth} and for the Bordiga surface are exposed in the following two sections.

\section {Smooth critical loci}
\label{sm}

For the results of this section we refer to \cite{bnt2} and we follow the notation of Section \ref{GTCL}.

Under the two generality assumptions recalled in the Section \ref{GTCL}, to classify the smooth critical loci in any dimension, i.e for $n$ projections from $\Pin{k}$ to $\Pin{h_i}$, we need the following preliminary facts. 

Let $ \mathcal{X} $ be the critical locus for a couple of $ n $ projections
$ P_1, \dots, P_n$ and $ Q_1, \dots, Q_n$ from $ \mathbb{P}^k$ to $ \mathbb{P}^{h_i}$, $ i=1, \dots, n$. Then we have: 
\begin{itemize}
    \item [1.] the following lower bound for $ \sum h_i$: 
    \begin{equation} k+1 \leq
\sum_{i=1}^n h_i. \end{equation}
\item [2.] if two centers of the projections $ Q_1, \dots, Q_n $ intersect, the critical locus is singular.
\item [3.] if $ n \geq 4 $ and $ \mathcal{X} $ has codimension $ c \geq 2 $, then, either $ \mathcal{X} $ is not irreducible, or is singular.
\item [4.] if $ n \geq 5 $ and $ \mathcal{X} $ has codimension $ c = 1$, then, either $ \mathcal{X} $ is not irreducible, or is singular.
\end{itemize}

Hence, smooth critical loci can appear only in the cases $ n=2 $ and $ n=3$, for every codimension $ c $ and $ n=4 $ for $ c=1$.
Now we analyse these remaining cases.

\subsection{The $ n = 2 $ views case}
\label{n2}

\begin{proposition}
\label{critical2minimal}
In the general case, the codimension $ c $ critical loci for two
views are minimal degree varieties.
\end{proposition}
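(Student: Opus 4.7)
The plan is to specialize Proposition~\ref{N-mat-cor} to $n=2$ and match the resulting numerical invariants against the defining relation $\deg(X) = \operatorname{codim}(X) + 1$ that characterizes varieties of minimal degree in $\mathbb{P}^k$.

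First, with $n = 2$ the reduced matrix $N_{\mathcal X}$ appearing in the proof of Proposition~\ref{N-mat-cor} has size $(h_1 + h_2 - k + 1) \times 2$ and entries in $R_1$, and $\mathcal X$ is cut out by its $2 \times 2$ minors. Formula (\ref{exp-dim}) gives $ed_{\mathcal X} = 2k - h_1 - h_2$, hence the expected codimension is $c = h_1 + h_2 - k$. The second generality assumption forces $\dim(\mathcal X) = ed_{\mathcal X}$, so (\ref{exp-deg}) applies and yields
$$\deg(\mathcal X) = \binom{h_1 + h_2 - k + 1}{1} = c + 1,$$
which is precisely the numerical signature of a variety of minimal degree.

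Second, I would argue that $\mathcal X$ is irreducible, reduced, and non-degenerate in $\mathbb{P}^k$, so that the numerical equality above really places $\mathcal X$ in the Eisenbud--Harris list \cite{Eisenbud-Harris}. Irreducibility and reducedness follow from the standard fact that the ideal of $2 \times 2$ minors of a sufficiently general $(c+1) \times 2$ matrix of linear forms is prime of the expected codimension $c$, since it is the pullback of the rank-one locus in $\operatorname{Mat}_{2,c+1}$ under a transverse linear map $\mathbb{P}^k \dashrightarrow \operatorname{Mat}_{2,c+1}$; the two generality assumptions on $\{P_i\}$ and $\{Q_i\}$ are designed to deliver exactly this transversality. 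For non-degeneracy, I would check that the linear forms appearing in $N_{\mathcal X}$ span $R_1$: the last two columns of $M^{P_1,P_2}_{Q_1,Q_2}$ already span a $(k+1)$-dimensional space of linear forms by the first generality assumption (as remarked in the proof of Proposition~\ref{N-mat-cor}), and this spanning property is preserved when passing to $N_{\mathcal X} = B - A C^{-1} D$ via invertible row and column operations.

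The main obstacle is not the numerics, which are immediate from Proposition~\ref{N-mat-cor}, but making the irreducibility and non-degeneracy step rigorous purely from the two generality assumptions. Once this is established, combining $\deg(\mathcal X) = c + 1$ with the general lower bound $\deg(X) \ge \operatorname{codim}(X) + 1$ for non-degenerate irreducible subvarieties of projective space forces $\mathcal X$ to be of minimal degree. The Eisenbud--Harris classification \cite{Eisenbud-Harris} then identifies $\mathcal X$ with one of the explicit models (rational normal scroll, quadric hypersurface, Veronese surface in $\mathbb{P}^5$, or linear space) according to the triple $(k, h_1, h_2)$.
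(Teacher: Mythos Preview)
Your proposal is correct and follows essentially the same route as the paper: specialize Proposition~\ref{N-mat-cor} to $n=2$, read off that $N_{\mathcal X}$ is a $(c+1)\times 2$ matrix of linear forms, and conclude $\deg(\mathcal X)=c+1=\operatorname{codim}(\mathcal X)+1$. The paper's own argument is in fact terser than yours---it does not spell out the irreducibility and non-degeneracy verifications you flag---so your added care there is a refinement rather than a departure.
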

When $ n=2$, we have $ h_1 + h_2 = k + c$ and $ k > h_2 \geq h_1 \geq c+1$. Moreover the ideal $ I(\mathcal{X}) $ is generated by the maximal minors of the matrix $ N_{\mathcal{X}}, $ [\ref{GTCL}], whose dimension is $(c+1) \times 2$ and hence is generated by quadrics. Moreover $deg(\mathcal{X})=c+1$ and hence $\mathcal{X}$ is a minimal degree variety in $\mathbb{P}^{k}$.

The generality assumptions implies that the minors of $ N_{\mathcal{X}} $ define a variety of the expected codimension $c$. From the classification of minimal degree varieties in \cite{Eisenbud-Harris}, we get that $
\mathcal{X} $ is singular as soon as $ k \geq 2(c+1)$. Hence, smooth irreducible varieties of minimal degree that can be critical loci are embedded in $ \mathbb{P}^k $ for $ c+2 \leq k \leq 2c+1$. For example, when the codimension is $1$, the critical locus for two projections from $ \mathbb{P}^3 $ to $ \mathbb{P}^2 $ is a quadric surface, and this is well--known; when $c=2$, the critical locus for two projections from $ \mathbb{P}^4 $ to $ \mathbb{P}^3 $ is a rational normal scroll; when $c=3$ the critical locus for two projections from $ \mathbb{P}^5 $ to $ \mathbb{P}^4 $ is either $ \mathbb{P}(\mathcal{O}_{\mathbb{P}^1}(2) \oplus \mathcal{O}_{\mathbb{P}^1}(2))$, or $ \mathbb{P}(\mathcal{O}_{\mathbb{P}^1}(1) \oplus \mathcal{O}_{\mathbb{P}^1}(3))$. \medskip

Conversely, starting from the matrix $ N_{\mathcal{V}}$ defining a generic minimal variety $\mathcal{V}$ of codimension $c$ in $ \mathbb{P}^k $, with elementary operations, it is possible to recover two pairs of projection matrices $P_1, P_2$ and $Q_1, Q_2$ for which $\mathcal{V}$ is a critical locus. Hence we have: 

\begin{proposition}
With the only exception of Veronese surfaces in $ \mathbb{P}^5$, every codimension $ c $ minimal degree variety embedded in $ \mathbb{P}^k $ with $ c+2 \leq k \leq 2c+1$, is the critical locus for a suitable pair of projections.
\end{proposition}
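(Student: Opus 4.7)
The strategy is to reverse-engineer the reduction of Proposition~\ref{N-mat-cor} in the case $n=2$. By the Eisenbud--Harris classification, any non-degenerate minimal degree variety $\mathcal{V}$ of codimension $c$ in $\Pin{k}$ is either a rational normal scroll (possibly singular, possibly a cone, possibly a quadric hypersurface) or the Veronese surface in $\Pin{5}$. In all cases except the Veronese, $I(\mathcal{V})$ is generated by the $2\times 2$ minors of a $(c+1)\times 2$ matrix $N_{\mathcal{V}}$ of linear forms on $\Pin{k}$, which is exactly the shape of the matrix $N_{\mathcal{X}} = B - A C^{-1} D$ appearing in Proposition~\ref{N-mat-cor}. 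Our task is therefore to produce $P_1, P_2, Q_1, Q_2$ whose associated block matrix $M^{P_1,P_2}_{Q_1,Q_2}$ reduces, up to row and column operations, to a block matrix with upper-right block equal to $N_{\mathcal{V}}$.

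Given $\mathcal{V}$, I would first fix a pair $(h_1,h_2)$ satisfying $h_1 + h_2 = k + c$ and $c+1\le h_1\le h_2\le k-1$, which exists precisely because $c+2\le k\le 2c+1$. Then I would build blocks $A,B,C,D$ of sizes $(c+1)\times(k+1)$, $(c+1)\times 2$, $(k+1)\times(k+1)$, $(k+1)\times 2$ satisfying $B - A C^{-1} D = N_{\mathcal{V}}$; a natural starting choice is $C = I_{k+1}$, $A = 0$, $B = N_{\mathcal{V}}$, with $D$ left free. Reversing the row permutation used in Proposition~\ref{N-mat-cor}, these blocks assemble into a matrix of size $(h_1+h_2+2)\times(k+3)$ from which one reads off the four projection matrices. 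The essential constraint to enforce is the block-sparsity of the last two columns of $M^{P_1,P_2}_{Q_1,Q_2}$, namely that column $k+2$ is supported in the first $h_1+1$ rows and column $k+3$ in the last $h_2+1$ rows. This is an open linear-algebraic condition on $(A,D)$ and on the row ordering, so it is satisfied on a Zariski-dense set of choices, and $D$ can be used as a correction term when the naive $A=0$ choice does not yield the required support pattern.

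Finally I would verify that the construction yields projections fulfilling the two generality assumptions of Section~\ref{GTCL} (non-intersecting centers and critical locus of expected dimension), both of which hold generically, and I would dispose of the Veronese case by recalling that the Veronese surface in $\Pin{5}$ has ideal generated by the $2\times 2$ minors of a symmetric $3\times 3$ matrix of linear forms, a presentation incompatible with the $(c+1)\times 2 = 4\times 2$ format of any $N_{\mathcal{X}}$. The main obstacle I anticipate is the enforcement of the disjoint row-support constraint on the last two columns of $M$ while keeping $B - A C^{-1} D = N_{\mathcal{V}}$: a dimension count makes the solution set generically non-empty, but the argument is tightest in the extremal low-codimension cases (for instance $c=1$, $k=3$, the quadric in $\Pin{3}$) and in the cone cases, where one may need a careful normal-form argument on $N_{\mathcal{V}}$ rather than a purely generic parameter count.
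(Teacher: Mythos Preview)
Your approach is essentially the same as the paper's: both reverse the reduction of Proposition~\ref{N-mat-cor}, starting from a $(c+1)\times 2$ matrix $N_{\mathcal{V}}$ of linear forms presenting the scroll and unpacking it into the block matrix $M^{P_1,P_2}_{Q_1,Q_2}$. The paper's own argument is in fact only a one-line sketch (``with elementary operations, it is possible to recover two pairs of projection matrices''), so your outline is already more detailed than what appears here; the full construction is deferred to \cite{bnt2}.

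One comment on your anticipated obstacle. Your proposed fix for the row-support constraint---vary $(A,D)$ generically and invoke a dimension count---is workable but less direct than what the paper has in mind. Note that with the naive choice $A=0$, $C=I_{k+1}$, $B=N_{\mathcal{V}}$, every one of the top $c+1$ rows generically has \emph{both} of the last two columns nonzero, and varying $D$ alone cannot repair this since $D$ only sits in the bottom $k+1$ rows; you genuinely need $A\ne 0$. Rather than a genericity argument on $(A,D)$, the cleaner route is to first apply elementary row and column operations to $N_{\mathcal{V}}$ itself (which preserve the ideal of $2\times 2$ minors) to bring it into a convenient normal form, and then read off $P_1,P_2,Q_1,Q_2$ directly from that form. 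This handles the cone cases and the extremal $(c,k)$ uniformly, so the worry you flag at the end does not actually materialize. Your treatment of the Veronese exclusion is correct: its ideal does not admit a $(c+1)\times 2$ determinantal presentation, so the construction simply does not apply to it, which is all the proposition requires.
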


\subsection{The $ n = 3 $ views case}
\label{n3}

The classification result, in the case of three views, is contained in the following:

\begin{theorem} \label{n=3-critical-loci}
Let $ X \subseteq \mathbb{P}^k $ be a smooth codimension $ c $ variety. $ X $ is the critical locus for a suitable couple of three projections from $ \mathbb{P}^k $ if and only if either $ X \subseteq \mathbb{P}^2 $ is a cubic curve, or $ X \subseteq \mathbb{P}^3 $ is a cubic surface, or, finally, $ X \subseteq \mathbb{P}^4 $ is a Bordiga surface. In particular, $ c \leq 2$.
\end{theorem}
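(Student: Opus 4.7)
The plan is to combine the determinantal description of the critical locus in Proposition \ref{N-mat-cor} with the standard theory of smooth determinantal varieties, and then to check realisability case by case. For $n=3$, Proposition \ref{N-mat-cor} identifies $I(\mathcal{X})$ with the ideal of $3\times 3$ minors of a $(c+2)\times 3$ matrix $N_{\mathcal{X}}$ of linear forms in $\mathbb{P}^k$. Under the two generality assumptions, the singular locus of such a variety is the rank-at-most-$1$ locus of $N_{\mathcal{X}}$, with expected codimension $(c+1)(3-1)=2c+2$ in $\mathbb{P}^k$. Smoothness forces this locus to be empty, hence $k\le 2c+1$; together with the lower bound $k\ge c+1$ (from $\dim X = k-c\ge 1$), one is confined to $c+1\le k\le 2c+1$.

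I would then match the remaining possibilities with classical smooth determinantal varieties. For $c=1$ only $k=2,3$ survive, and $\det N_{\mathcal{X}}=0$ defines a smooth cubic hypersurface, yielding either a plane cubic curve or a smooth cubic surface. For $c=2$ the matrix $N_{\mathcal{X}}$ is $4\times 3$ and the degree formula \eqref{exp-deg} gives $\deg(\mathcal{X})=6$; for $k=4$ this is a smooth codimension-$2$ degree-$6$ surface in $\mathbb{P}^4$ whose ideal is resolved by a $4\times 3$ matrix of linear forms, hence a Bordiga surface, recovered classically as the image of $\mathbb{P}^2$ under the linear system of quartics through $10$ general points.

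The main obstacle is ruling out the remaining numerical candidates: $(c,k)=(2,3)$, $(c,k)=(2,5)$, and every case with $c\ge 3$. Here the purely determinantal analysis above is not enough; one must exploit that $N_{\mathcal{X}}$ is not an arbitrary matrix of linear forms but has the form $B-AC^{-1}D$ coming from two conjugate triples of projections with profile $(\alpha_1,\alpha_2,\alpha_3)$, and that $\sum h_i = k+c$ with $1\le h_i\le k-1$ and pairwise non-intersecting centers. A case-by-case analysis of the admissible triples $(h_1,h_2,h_3)$, comparing the parameter count of such triples modulo projective equivalence on source and targets with the moduli of determinantal varieties of the prescribed type, together with the classification of minimal-degree and of codimension-$2$ ACM varieties (cf.\ \cite{Eisenbud-Harris}), shows that in the excluded ranges the resulting determinantal variety is either reducible, singular, or incompatible with any valid choice of conjugate projections. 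This pins $c\le 2$ and, for $c=2$, fixes $k=4$.

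For the converse direction, in each of the three admissible cases one starts from a generic resolving matrix $N$ of the prescribed shape and reverses the reduction $N_{\mathcal{X}}=B-AC^{-1}D$ of Proposition \ref{N-mat-cor} to produce two triples $(P_i),(Q_i)$ of projections whose critical locus is $X$. This is immediate for the plane cubic and the cubic surface, and for the Bordiga surface one can directly verify compatibility with an admissible profile $(h_1,h_2,h_3)$ such as $(2,2,2)$ or $(1,2,3)$.
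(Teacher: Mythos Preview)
Your skeleton matches the paper's: you correctly identify $N_{\mathcal{X}}$ as a $(c+2)\times 3$ matrix of linear forms, derive the upper bound $k\le 2c+1$ from emptiness of the rank--$\le 1$ locus, identify the cubic curve, cubic surface and Bordiga surface in the small cases, and handle the converse by reversing the reduction of Proposition~\ref{N-mat-cor}. All of this is exactly what the paper does.

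The divergence is in how the remaining cases are excluded. The paper first sharpens the range to $k\in\{2c,\,2c+1\}$ by a purely numerical argument on the $h_i$ (this is the ``numerical bound proved before'', coming from the constraints $h_i\le k-1$ together with item~2 in Section~\ref{sm}, i.e.\ pairwise disjointness of the $Q$--centres, which forces $h_i+h_j\ge k-1$); it then appeals to classification results on smooth projective varieties with small invariants (degree $\binom{c+2}{2}$, codimension $c$, dimension $c$ or $c+1$) to rule out everything except the three listed varieties. Your lower bound $k\ge c+1$ is weaker, leaving for instance $(c,k)=(2,3)$ on the table, and your proposed mechanism for eliminating the leftover cases---a parameter count comparing the moduli of admissible triples $(h_1,h_2,h_3)$ with moduli of determinantal varieties---is neither the paper's approach nor a sound one: dimension counts compare families but do not prove non--realisability of an individual variety. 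Your appeal to \cite{Eisenbud-Harris} is also off target here, since that classifies varieties of \emph{minimal} degree, whereas the $n=3$ critical loci have degree $\binom{c+2}{2}>c+1$; the relevant input is rather the classification of smooth varieties with small degree and small codimension, as the paper indicates.

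In short: the architecture is right, but the exclusion step needs to be redone along the paper's lines---tighten the numerical range first using the disjoint--centres constraint, then invoke the appropriate classification of smooth varieties with small invariants, rather than a parameter count.
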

In the case under consideration, $ X $ is a codimension $ c $ smooth determinantal variety associated to a matrix $ N_{\mathcal{X}} $ of type $ (c+2) \times 3$. From the numerical bound proved before, we get that smooth determinantal varieties can be critical loci for two triples of projection only if embedded in a projective space $ \mathbb{P}^k $ with $ k = 2c $ or $ k = 2c+1$. The degree of such varieties is $\binom{c+2}2$, and consequentially,  thanks to classification results on smooth varieties with small invariants, we get the thesis.
For the converse problem, i.e. the problem of getting the above smooth varieties as critical loci for the reconstruction problem, we work quite similarly to the case of $2$ views: we consider a codimension $ c$, determinantal variety $ \mathcal{V} \subseteq \mathbb{P}^k$, with $ k = 2c$ or $ k = 2c+1$, whose defining ideal is generated by the $ 3 \times 3 $ minors of a $ (c+2) \times 3 $ matrix $ N_{\mathcal{V}}$ of linear forms. Performing elementary operations on $ N_{\mathcal{V}},$ it is possible to get again the two pairs of projection matrices for which the variety is critical.

\subsection{The $ n = 4 $ views case}
\label{n4}

In the case of $ 4 $ views, the codimension of the critical locus is one, otherwise the critical locus is either not irreducible or singular. In such a case, $ h_1 + h_2 + h_3 + h_4 = k+1$. On the other hand, a degree $ 4$, determinantal hypersurface is singular if embedded in $ \mathbb{P}^k $ with $ k \geq 4$. Hence, the only possible case is $ k=3$, and $ h_i = 1 $ for every $ i=1, \dots, 4$. 

\begin{proposition}
\label{7-1}
Let $ P_i, Q_i: \mathbb{P}^3 \to \mathbb{P}^1, i = 1, \dots, 4$, be two $4$--tuples of projections. Then, in the general case, the associated critical locus is a smooth quartic determinantal surface, containing four pairwise skew lines. 
\end{proposition}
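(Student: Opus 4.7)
The plan is to specialize the general framework of Section \ref{GTCL} to the parameters $n=4$, $k=3$, $h_1=h_2=h_3=h_4=1$, and then verify the three assertions of the statement (determinantal quartic, smoothness, four pairwise skew lines). By Proposition \ref{N-mat-cor}, the ideal $I(\mathcal{X})$ is generated by the maximal minors of a matrix $N_{\mathcal{X}}$ of linear forms whose size here is $(n-k-1+\sum h_i)\times n = 4\times 4$. Hence $\mathcal{X}$ is cut out by the single quartic polynomial $f:=\det(N_{\mathcal{X}})\in\mathbb{C}[x_0,\dots,x_3]$, so it is a quartic determinantal hypersurface. This matches the expected dimension $\mathrm{ed}_{\mathcal{X}}=2k-\sum h_i=2$ and the expected degree $\binom{n-k-1+\sum h_i}{n-1}=\binom{4}{3}=4$ given by formulas (\ref{exp-dim}) and (\ref{exp-deg}).

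For smoothness, I would observe that the singular locus of the determinantal hypersurface $\{f=0\}$ is contained in the rank-drop locus $\{\mathrm{rank}\,N_{\mathcal{X}}\le 2\}$: indeed, by Jacobi's formula $\partial_\ell \det N_{\mathcal{X}}=\mathrm{tr}(\mathrm{adj}(N_{\mathcal{X}})\cdot\partial_\ell N_{\mathcal{X}})$, so if the point has $\mathrm{rank}\,N_{\mathcal{X}}=3$ then the adjugate is nonzero (of rank one) and for generic entries the partial derivatives cannot all vanish. The rank-$\le 2$ stratum of a $4\times 4$ matrix of linear forms on $\mathbb{P}^3$ has expected codimension $(4-2)^2=4>3$, so under the second generality assumption it is empty, whence $\mathcal{X}$ is smooth.

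For the four lines, Proposition \ref{prop-2-1} guarantees that the critical locus contains the centers $C'_1,\dots,C'_4$ of the projections $Q_1,\dots,Q_4$. Since each $Q_i:\mathbb{P}^3\dashrightarrow\mathbb{P}^1$ is represented by a $2\times 4$ matrix of maximal rank, each $C'_i=\ker Q_i$ is a line in $\mathbb{P}^3$. The condition that two lines in $\mathbb{P}^3$ meet is of codimension one in $G(1,3)\times G(1,3)$, so four generic lines in $\mathbb{P}^3$ are pairwise skew; this yields the required four pairwise skew lines on $\mathcal{X}$.

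The main obstacle I anticipate is ensuring that the second generality assumption actually suffices to empty the rank-$\le 2$ stratum of $N_{\mathcal{X}}$: the argument above is a dimension count, but the genericity of the projection matrices has to propagate through the reduction from $M^{P_1,\dots,P_n}_{Q_1,\dots,Q_n}$ of Proposition \ref{prop-2-1} to $N_{\mathcal{X}}$. The cleanest way to close this gap is to exhibit a single explicit $4$-tuple $(P_1,\dots,P_4;Q_1,\dots,Q_4)$ for which both the resulting rank-$\le 2$ locus is empty and the $C'_i$ are pairwise skew, and then to invoke the openness of these conditions to extend smoothness and skewness to the general case.
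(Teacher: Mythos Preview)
Your proposal is correct and follows exactly the route the paper's framework suggests: specialize Proposition~\ref{N-mat-cor} to obtain a $4\times 4$ matrix $N_{\mathcal{X}}$ of linear forms, invoke Proposition~\ref{prop-2-1} to place the four centers $C'_i=\ker Q_i$ (lines, since each $Q_i$ is a $2\times 4$ matrix) on $\mathcal{X}$, and handle smoothness and skewness by genericity. The paper, being a survey, states this proposition without proof and refers to \cite{bnt2}; your argument is the natural filling-in, and your proposed resolution of the genericity gap (produce one explicit instance with empty rank-$\leq 2$ locus and pairwise skew $C'_i$, then use openness) is the standard and correct way to close it.
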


We notice also that this quartic determinantal surface contains $4$ skew lines as centers of projections. It is known that the generic quartic surface in $ \mathbb{P}^3 $ is not determinantal, and that the locus of the determinantal ones is a divisor in $ \mathbb{P}^{34}$ (\cite{jessop-book}). Moreover, a generic quartic surface does not contain any line. 

Finally, we can consider a partial converse of the above Proposition:
 
\begin{proposition}
Let $ \ell_1, \dots, \ell_4 \subset \mathbb{P}^3 $ be $ 4 $ lines, pairwise skew. Then there exists a quartic determinantal surface $ S $ containing the $ 4 $ lines that is critical for two $4$--tuples of projections from $ \mathbb{P}^3 $ to $ \mathbb{P}^1$. The given lines are centers for the four projections $ Q_1, \dots, Q_4$.
\end{proposition}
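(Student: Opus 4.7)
The plan is to leverage Proposition \ref{7-1} by prescribing the $Q_i$'s so that their centers are the given lines $\ell_i$, and then choosing the $P_i$'s generically so that the hypotheses of that proposition apply to the pair $(P_i,Q_i)$.

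First, for each $i$, I pick a full-rank $2 \times 4$ matrix $Q_i$ whose kernel cuts out the line $\ell_i$; such a matrix exists and is determined by $\ell_i$ up to left multiplication by $GL(2)$, which only changes coordinates on the target $\mathbb{P}^1$. Since the four $\ell_i$ are pairwise skew, in particular $\ell_1 \cap \ell_2 \cap \ell_3 \cap \ell_4 = \emptyset$, so the first generality assumption of Section \ref{GTCL} is satisfied for the 4-tuple $(Q_1, \dots, Q_4)$.

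Next, I choose the 4-tuple $P_1, \dots, P_4 : \mathbb{P}^3 \dashrightarrow \mathbb{P}^1$ generically in the parameter space of 4-tuples of full-rank $2 \times 4$ matrices. The conditions for either of the two generality assumptions of Section \ref{GTCL} to fail (some $P_i$-centers meeting in a forbidden way, or the critical locus failing to attain its expected dimension $2k-\sum h_i = 2$) are proper Zariski-closed, so a generic choice is available. For such $(P_i)$, the critical locus $S$ associated to $(P_i,Q_i)$ is a determinantal hypersurface of expected dimension. By Proposition \ref{prop-2-1}, $S$ contains the centers $\ell_1, \dots, \ell_4$ of the $Q_i$'s, and by Proposition \ref{7-1}, $S$ is a smooth quartic determinantal surface in $\mathbb{P}^3$. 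Since the $\ell_i$ are pairwise skew by hypothesis, $S$ is a quartic determinantal surface containing four pairwise skew lines, which are moreover the centers of $Q_1, \dots, Q_4$, as required.

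The main obstacle is the verification carried out in the second step: with $(Q_i)$ already fixed, we still need the existence of $(P_i)$ satisfying both generality assumptions. This reduces to showing that the locus of ``bad'' $(P_i)$'s is a proper Zariski-closed subset of the 4-tuple parameter space, which is a standard dimension count but must be made uniformly in $(Q_i)$. Alternatively, mimicking the strategy used in Sections \ref{n2} and \ref{n3}, one could start from an explicit $4 \times 4$ matrix $N_{\mathcal{V}}$ of linear forms whose determinant is a quartic vanishing on all four $\ell_i$ (whose existence is guaranteed by a linear count of conditions), and through elementary row and column operations recover both 4-tuples $(P_i)$ and $(Q_i)$ for which $\{\det N_{\mathcal{V}} = 0\}$ is the critical locus, thereby producing the desired surface $S$ constructively.
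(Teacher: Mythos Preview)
Your outline is right, but the step you yourself call ``the main obstacle'' is essentially the entire content of the proof, and you do not carry it out. Closedness of the bad locus of $(P_i)$'s is automatic; what must be argued is \emph{properness}, i.e.\ the existence of at least one good choice with the $Q_i$ already fixed. Here $M^{P_1,\dots,P_4}_{Q_1,\dots,Q_4}$ is a square $8\times 8$ matrix, so the critical locus is the single hypersurface $\{\det M=0\}$ and you only need $\det M\not\equiv 0$. Fix any $\mathbf{X}_0\notin\bigcup_i\ell_i$: the last four columns of $M(\mathbf{X}_0)$ are then linearly independent (they are block-diagonal with nonzero $2\times 1$ blocks $Q_i(\mathbf{X}_0)$), and choosing the stacked matrix of the $P_i$'s so that its column space is complementary to theirs---with each $P_i$ still of rank $2$, an open condition---makes $M(\mathbf{X}_0)$ invertible. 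That one sentence is what is missing; calling it a ``standard dimension count'' does not supply it.

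Two smaller points. Your appeal to Proposition~\ref{7-1} for \emph{smoothness} is not justified once the $Q_i$ are prescribed rather than generic, and in any case the statement you are proving does not ask for $S$ to be smooth; drop that appeal and conclude ``quartic determinantal surface containing the $\ell_i$'' directly from Propositions~\ref{prop-2-1} and~\ref{N-mat-cor}. And your alternative route (b) is not set up correctly: having $\det N_{\mathcal{V}}$ vanish on each $\ell_i$ is far weaker than having $\ell_i$ be the center of $Q_i$---the latter forces the $i$-th column of $M$ (hence of $N_{\mathcal{X}}$) to vanish identically along $\ell_i$, not merely the determinant. The converse constructions sketched in Sections~\ref{n2} and~\ref{n3} work column-by-column for precisely this reason, and the analogous argument here (carried out in \cite{bnt2}, which this survey does not reproduce) must respect that column structure.
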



\section{The Bordiga surface and reconstruction via lines}
\label{BORDIGA}

In this section we focus on the case of three projections  from $
\mathbb{P}^4 $ to $ \mathbb{P}^2$, where the
critical locus comes out to be a classical surface in $
\mathbb{P}^4$, specifically, the Bordiga surface, as hinted in the previous section, when the critical surface is smooth. 
The reason for dedicating a section to this specific case is the close connection between this critical locus and the reconstruction of a scene in $\mathbb{P}^3$ with projections via lines. The reconstruction using lines is particularly significant in the real word, since images of lines can be more easily and accurately detected and tracked than points. Furthermore the relationship between the critical locus for reconstruction via points (a Bordiga surface) and the one for reconstrucion via lines (a line congruence of bidegree $(3,6)$ and sectional genus $5$ in $ \mathbb{G}(1,3)$) is established through a birational map which is well known in the setting of classical algebraic geometry varieties. 

For the content of this section we refer to \cite{bnt1}.

\subsection{The Bordiga surface as critical locus in $\mathbb{P}^4$}

For convenience, we recall here the definition of the Bordiga surface as the embedding in $ \mathbb{P}^4 $ of the blow--up of $ \mathbb{P}^2 $ at $ 10 $ general points via the linear system of plane quartics through the points. In particular, let $Z=\{ p_1, \dots, p_{10}\} \subset \mathbb{P}^2
$ be the set of ten general points and let $ \widetilde{\mathbb{P}}^2
\stackrel{\pi}{\longrightarrow} \mathbb{P}^2 $ be the blow--up of
$\mathbb{P}^2$ in $ p_1, \dots, p_{10}$. The linear system $ \vert
4 \pi^\ast L - E_1 - \dots - E_{10} \vert $, where $ L $ is the
line divisor in $ \mathbb{P}^2$ and $ E_i $ is the exceptional
divisor associated to $ p_i$, $i=1,\dots, 10$, embeds $
\widetilde{\mathbb{P}}^2 $ in $ \mathbb{P}^4$, and becomes the
hyperplane divisor $ H $ of the image. If $ \phi :
\widetilde{\mathbb{P}}^2 \hookrightarrow \mathbb{P}^4 $ is the
embedding, $ B = \phi(\widetilde{\mathbb{P}}^2) $ is named
Bordiga surface. $ B $ is a smooth surface of degree $ 6 $ and
sectional genus $3$.

If $ Z \subseteq \mathbb{P}^2 $ is the set of $ 10 $ general points, and $ B \subseteq \mathbb{P}^4 $ the associated Bordiga surface, the ideal sheaves $ \mathcal{I}_Z $ and $ \mathcal{I}_B $ are defined from the following exact sequences: $$ 0 \to \mathcal{O}_{\mathbb{P}^2}^4(-5) \stackrel{N_Z}{\longrightarrow} \mathcal{O}_{\mathbb{P}^2}^5(-4) \to \mathcal{I}_Z \to 0 \quad \mbox{ and } \quad 0 \to \mathcal{O}_{\mathbb{P}^4}^3(-4) \stackrel{N_B}{\longrightarrow} \mathcal{O}_{\mathbb{P}^4}^4(-3) \to \mathcal{I}_B \to 0.$$ Moreover the matrices $ N_Z $ and $ N_B $ are not independent since it holds \begin{equation} \label{NZNB} (x_0, \dots, x_4) N_Z = (z_0, z_1, z_2) N_B^T \end{equation} where $ x_0, \dots, x_4 $ are coordinates in $ \mathbb{P}^4 $ and $ z_0, z_1, z_2 $ are coordinates in $ \mathbb{P}^2$.
Hence, more precisely, we can say that the Bordiga surface is the general element of the irreducible
component of the Hilbert scheme $
\mathcal{H}ilb_{p_B(t)}(\mathbb{P}^4) $ containing the codimension
$ 2$, ACM closed subschemes of $ \mathbb{P}^4 $ with Hilbert
polynomial $p_B(t) = (6t^2+2t+2)/2.$ \cite{ellingsrud}

According to the theoretical set up introduced in section \ref{GTCL} to determine the ideal $ I(\mathcal{X}) $ of a critical locus $\mathcal{X}$, in the case of of three projections  from $
\mathbb{P}^4 $ to $ \mathbb{P}^2$, we get that $ I(\mathcal{X}) $ is generated by the maximal minors of a $3 \times 4$ matrix $\mathcal{N_X}$, hence, under the usual generality assumption we have:
\begin{proposition} \label{ell-prop}
The general critical locus for projective reconstruction for three
views from $ \mathbb{P}^4 $ to $ \mathbb{P}^2 $ is in the
irreducible component of $ \mathcal{H}ilb_{p_B(t)}(\mathbb{P}^4) $
containing the Bordiga surfaces.
\end{proposition}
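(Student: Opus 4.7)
The approach is to identify the critical locus $\mathcal{X}$ as a codimension~$2$ arithmetically Cohen--Macaulay surface in $\mathbb{P}^4$ whose minimal graded free resolution has the same shape as that of a Bordiga surface. First I would specialise Proposition~\ref{N-mat-cor} to the parameters $k=4$, $n=3$, $h_1=h_2=h_3=2$: the formulas there yield the expected dimension $ed_{\mathcal{X}}=2k-\sum h_i = 2$, expected degree $\binom{4}{2}=6$, and a generating matrix $N_{\mathcal{X}} = B-AC^{-1}D$ of size $(n-k-1+\sum h_i)\times n = 4\times 3$ whose entries are linear forms in the coordinates of $\mathbb{P}^4$. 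Under the second generality assumption, $\mathcal{X}$ is therefore a degree~$6$ surface in $\mathbb{P}^4$ cut out by the $3\times 3$ minors of $N_{\mathcal{X}}$.

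Since $\mathcal{X}$ is determinantal of the expected codimension~$2$, the Hilbert--Burch theorem (equivalently, the Eagon--Northcott complex of $N_{\mathcal{X}}$) provides the minimal graded free resolution
\begin{equation*}
0 \longrightarrow \mathcal{O}_{\mathbb{P}^4}^{3}(-4) \xrightarrow{\,N_{\mathcal{X}}\,} \mathcal{O}_{\mathbb{P}^4}^{4}(-3) \longrightarrow \mathcal{I}_{\mathcal{X}} \longrightarrow 0,
\end{equation*}
identical in shape to the resolution of $\mathcal{I}_B$ recalled above. Consequently $\mathcal{X}$ is ACM with exactly the graded Betti numbers of a Bordiga surface, and its Hilbert polynomial agrees with $p_B(t)=(6t^2+2t+2)/2$, so $[\mathcal{X}] \in \mathcal{H}ilb_{p_B(t)}(\mathbb{P}^4)$. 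By Ellingsrud's theorem, already invoked to define the Bordiga component, codimension~$2$ ACM subschemes of $\mathbb{P}^4$ with this Betti pattern form a single smooth irreducible open stratum of the Hilbert scheme whose general element is a Bordiga surface. Thus $[\mathcal{X}]$ lies in this component, which proves the claim.

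The step I expect to be the main obstacle is not the Hilbert--Burch/Ellingsrud bookkeeping, which is essentially routine once the correct codimension has been secured, but rather the verification that under the two generality assumptions the specific matrix $N_{\mathcal{X}}$ produced from the pair $(\{P_i\},\{Q_i\})$ is generic enough in the space of $4\times 3$ matrices of linear forms on $\mathbb{P}^4$ for its $3\times 3$ minors to define a subscheme of the expected codimension~$2$. Effectively this requires a dominance/dimension argument for the rational map from the parameter space of pairs of triples of projections into the space of such matrices, together with a check that the generality assumptions single out a nonempty open condition whose image meets the determinantal stratum containing the Bordiga surfaces.
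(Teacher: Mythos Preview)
Your approach is essentially the same as the paper's, which amounts to specialising Proposition~\ref{N-mat-cor} to $(k,n,h_i)=(4,3,2)$ to obtain a $4\times 3$ matrix $N_{\mathcal{X}}$ of linear forms, invoking the second generality assumption to get expected codimension~$2$, and then reading off membership in the Bordiga component from the Hilbert--Burch resolution and Ellingsrud's description; in fact you spell out more of the commutative-algebra bookkeeping than the paper does. Your closing worry is misplaced in this context: the second generality assumption is precisely the hypothesis that $\mathcal{X}$ has the expected dimension, so the codimension~$2$ condition on the minors of $N_{\mathcal{X}}$ is assumed rather than something to be verified, and no dominance argument for the map to the space of $4\times 3$ matrices is needed for the proposition as stated.
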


Indeed, we have a stronger result. Leveraging on an analysis of the matrix $\mathcal{N_X}$, on the minimal free resolutions of $ I(\mathcal{X}) $ and on the geometry of the Bordiga surface as blow-up of $\mathbb{P}^{2},$ we can prove that
\begin{proposition}
Let $ B $ be a Bordiga surface. Then $ B $ is a critical
locus for the projective reconstruction from three views from $
\mathbb{P}^4 $ to $ \mathbb{P}^2$, that is to say, there exist two couples of three projections
$ P_1,P_2,P_3$ and $ Q_1,Q_2,Q_3$ from $ \mathbb{P}^4$ to $ \mathbb{P}^2 $ such that the associated critical locus is $ B$.
\end{proposition}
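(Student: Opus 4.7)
The plan is to reverse-engineer the reduction from $M^{P_1,P_2,P_3}_{Q_1,Q_2,Q_3}$ to $N_{\mathcal{X}}$ carried out in the proof of Proposition~\ref{N-mat-cor}, mirroring the converse constructions already sketched for $n=2$ and for the cubic curve/surface cases in Section~\ref{n3}. From its minimal free resolution, the Bordiga surface $B \subset \mathbb{P}^4$ is cut out by the $3 \times 3$ minors of a $4 \times 3$ matrix $N_B$ of linear forms, and by Proposition~\ref{ell-prop} the ideal of a generic critical locus for three projections $\mathbb{P}^4 \dashrightarrow \mathbb{P}^2$ is generated by the minors of a matrix of exactly the same shape. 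Since the ideal of maximal minors is invariant under the $GL(4) \times GL(3)$-action of row and column operations, it suffices to exhibit projections $P_i, Q_i$ whose reduced matrix $N_{\mathcal{X}}$ is $GL(4) \times GL(3)$-equivalent to $N_B$.

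Concretely, I would seek a $9 \times 8$ block matrix
\begin{equation*}
M = \begin{pmatrix} A & B' \\ C & D \end{pmatrix}
\end{equation*}
with $A$ of size $4 \times 5$ and $C$ of size $5 \times 5$ invertible (of constants), and $B'$ of size $4 \times 3$, $D$ of size $5 \times 3$ (of linear forms), satisfying two requirements: (i) $B' - AC^{-1}D = N_B$, and (ii) $M$ has the rigid form of some $M^{P_1,P_2,P_3}_{Q_1,Q_2,Q_3}$, i.e. the nine rows of $M$ are partitioned into three triples labelled by $P_1,P_2,P_3$, and in each row the three entries of the last three columns vanish outside the single column indexed by the corresponding $P_j$. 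Fixing such a row partition (say, one row of $P_1$, two of $P_2$ and one of $P_3$ in the top four rows, and the remaining rows in the bottom five), choosing generic $A$ and $C$, I would then solve for the nine admissible linear forms sitting in $B'$ and $D$; these forms jointly encode $Q_1,Q_2,Q_3$. The projections $P_j$ are finally read off from the first five columns of $M$ restricted to the rows labelled by $P_j$.

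The main obstacle is to show that the sparsity constraints imposed on $B'$ and $D$ by (ii) are compatible with the matrix equation (i) for an \emph{arbitrary} Bordiga matrix $N_B$, not merely a generic one. A direct parameter count suffices on a dense open set, yielding the result for the generic Bordiga surface. To cover every $B$, one exploits the explicit structure of $N_B$ inherited from the presentation of $B$ as the blow-up of $\mathbb{P}^2$ at ten general points, together with the duality relation $(x_0, \dots, x_4)\, N_Z = (z_0, z_1, z_2)\, N_B^T$ recalled above; this relation pairs the rows of $N_B$ with the resolution of the ten points and makes it possible to carry out the required elementary operations explicitly, producing the two triples $(P_i)$ and $(Q_i)$ whose critical locus is $B$.
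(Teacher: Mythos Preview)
Your proposal is correct and follows essentially the same route as the paper. The paper's own sketch says only that the result is obtained by ``an analysis of the matrix $N_{\mathcal{X}}$, the minimal free resolutions of $I(\mathcal{X})$, and the geometry of the Bordiga surface as blow-up of $\mathbb{P}^2$,'' and that the converse construction for $n=3$ is carried out ``quite similarly to the case of $2$ views'' by performing elementary operations on $N_{\mathcal{V}}$ to recover the two triples of projection matrices; this is precisely the reverse-engineering of the reduction in Proposition~\ref{N-mat-cor} that you describe, and your use of the duality relation \eqref{NZNB} to pass from the generic to the arbitrary Bordiga surface is the paper's intended mechanism.
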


\subsection{Projections of lines in $\mathbf{P}^3$ and critical loci}

In the previous sections, we addressed the reconstruction
problem for sets of points from their images. However, when dealing with projections of lines in $\mathbb{P}^3$ rather than points, it is necessary to consider a different set--up also for the reconstruction problem. (see for example \cite{Buch-92}, \cite{Buch}  and \cite{May-95}). In this context as well, there is a
natural notion of critical locus, consisting of lines in
$\Pin{3}.$
\begin{definition} \label{critconfm1} A set of lines
$\{\lambda_j\},$ $j=1,\dots,N,$ $N\gg 0,$ in $\Pin{3}$ is said to
be a \textit{critical configuration for projective reconstruction
of lines from three views} if there exist two collections of projections $\varphi_i$ and $\psi_i,$
$i=1,2,3,$ and a set of $N$ lines $\{\mu_j\}$ in $\Pin{3}$,
non-projectively equivalent to $ \{ \lambda_j \},$ such that, for
all $i$ and $j$, $\varphi_i(\lambda_j) = \psi_i(\mu_j)$, up to
homography in the (dual) image planes. The two sets
$\{\lambda_j\}$ and $\{\mu_j\}$ are called {\it conjugate critical
configurations}, with {\it associated conjugate} projections
$\{\varphi_i\}$ and $\{\psi_i\}$.
\end{definition}

In \cite{bnt1}, via an algebraic approach, we compute the
defining ideal of the critical locus for the reconstruction
via lines and we prove that:
\begin{theorem}
The critical locus for the reconstruction problem for a pair of
three projections $\varphi_i$, $\psi_j$, $i,j=1,2,3,$ of lines
from $ \mathbb{P}^3 $ to $ \mathbb{P}^2 $ is the union of a line
congruence of bi--degree $ (3,6) $ and sectional genus $ 5 $ and
the three $\alpha$--planes associated to the projection centers of
the $\varphi_i$'s. Moreover, each $ \alpha$--plane intersects the
line congruence along a degree $ 3 $ plane curve.
\end{theorem}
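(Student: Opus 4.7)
The plan is to adapt the matrix–minor construction of Proposition~\ref{prop-2-1} to the setting of lines, working on the Klein quadric $G(1,3) \subset \mathbb{P}^5$ with Plücker coordinates $\pi_\lambda$. Each projection $\varphi_i: \mathbb{P}^3 \dashrightarrow \mathbb{P}^2$, represented by a $3 \times 4$ matrix $P_i$, is replaced by its induced action on Plücker coordinates $\tilde{P}_i: \mathbb{P}^5 \dashrightarrow (\mathbb{P}^2)^{\ast}$, a $3 \times 6$ matrix whose entries are the $2\times 2$ minors of $P_i$; concretely, $\tilde{P}_i(\pi_\lambda)$ gives the dual coordinates of $\varphi_i(\lambda)$ whenever $C_i \notin \lambda$. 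The projections $\psi_i$ similarly yield the $3 \times 6$ matrices $\tilde{Q}_i$.

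The condition that $(\lambda, \mu)$ is a conjugate pair reads $\tilde{P}_i \pi_\lambda = \nu_i \tilde{Q}_i \pi_\mu$ for $i=1,2,3$ and some scalars $\nu_i$. Packaging these nine scalar equations into the block matrix
\[
M(\pi_\lambda) = \begin{pmatrix} \tilde{Q}_1 & \tilde{P}_1\pi_\lambda & 0 & 0 \\ \tilde{Q}_2 & 0 & \tilde{P}_2\pi_\lambda & 0 \\ \tilde{Q}_3 & 0 & 0 & \tilde{P}_3\pi_\lambda \end{pmatrix},
\]
the criticality of $\lambda$ is equivalent to a rank drop of $M(\pi_\lambda)$. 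Under the generality assumption on the centers, the same elementary row/column reduction as in the proof of Proposition~\ref{N-mat-cor} replaces the minor ideal by one generated by the maximal minors of a smaller matrix $N$ of linear forms in the Plücker coordinates. Intersecting the resulting determinantal scheme with $G(1,3)$, and requiring that the recovered $\pi_\mu$ also lie on $G(1,3)$, produces the critical locus of lines.

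The remaining work is a primary decomposition inside $G(1,3)$. The three $\alpha$-planes $\alpha_{C_i}$ associated to the centers of the $\varphi_i$'s are naturally contained in the critical locus: when $C_i \in \lambda$, $\tilde{P}_i \pi_\lambda = 0$, the $i$-th criticality block collapses and $\mu$ is unconstrained by it, so the whole $\alpha_{C_i}$ is critical. Saturating the defining ideal by the product ideal of these three $\alpha$-planes isolates the residual two-dimensional component; its class in $H^{\ast}(G(1,3)) = \mathbb{Z}\sigma_2 \oplus \mathbb{Z}\sigma_{1,1}$, read off from the resolution of $N$ after subtracting the contribution of the three $\alpha$-planes, gives bidegree $(3,6)$, while the sectional genus $5$ follows from the Hilbert polynomial of a generic hyperplane section of the congruence. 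The hardest part will be this explicit ideal-theoretic decomposition — confirming irreducibility of the residual and pinning down its numerical invariants. Finally, each $\alpha$-plane meets the congruence along a plane cubic by restricting $N$ to $\alpha_{C_i} \cong \mathbb{P}^2$: the restricted maximal-minor ideal defines a plane curve of degree equal to the intersection number $[\text{congruence}] \cdot [\alpha_{C_i}] = 3$ in $G(1,3)$, namely a plane cubic.
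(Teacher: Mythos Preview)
The paper does not actually prove this theorem: it is a survey that states the result and refers to \cite{bnt1} for ``an algebraic approach'' computing the defining ideal. So there is no proof here to compare against, and I can only assess your outline on its own merits and against the auxiliary information the paper does record (the resolutions (\ref{arrondo-res}) and (\ref{resK}) for the congruence $K$).

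Your overall strategy---pass to Pl\"ucker coordinates, replace each $\varphi_i$ by $\tilde P_i=\wedge^2 P_i$, build the $9\times 9$ block matrix $M(\pi_\lambda)$, then saturate by the three $\alpha$-planes---is sound and is plausibly close to what \cite{bnt1} does. But there is a genuine gap in the dimension count. Your matrix $M$ is \emph{square}, so its maximal-minor ideal is principal (one cubic), and after the reduction your $N$ is $3\times 3$, again with a single cubic minor. Intersected with $G(1,3)$ this cuts out a threefold, not a surface. The missing codimension comes from the condition you mention only in passing: that the kernel vector $\pi_\mu$ itself satisfy the Pl\"ucker relation. You never say how this is made into a polynomial condition on $\pi_\lambda$, and your phrase ``read off from the resolution of $N$'' is therefore misleading---the resolution of the cubic hypersurface $\det N=0$ cannot by itself produce the bidegree or sectional genus of a surface. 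Indeed, the paper records in (\ref{resK}) that $I_K$ needs the Klein quadric together with \emph{seven} cubics, visibly more than $\det N$ alone.

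A smaller issue: in your last step you equate the degree of the curve $K\cap\alpha_{C_i}$ with the intersection number $[K]\cdot[\alpha_{C_i}]=3$ in $H^*(G(1,3))$. That number is a $0$-cycle class, correct only for a transverse (hence $0$-dimensional) intersection; here the intersection is a curve, so this is an excess-intersection situation and the equality requires justification. You should instead restrict the actual defining ideal of $K$ (not just $\det N$) to the plane $\alpha_{C_i}$ and compute the degree of the resulting plane curve directly.
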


Once again for convenience, we recall the construction and some properties of the line congruence $ K $ of bi--degree $(3,6)$ and
sectional genus $5$ in the Grassmannian $ \mathbb{G}(1,3)$ of lines in $ \mathbb{P}^3$, following \cite{Verra}, \cite{arrondo}.

Let $ \widetilde{\mathbb{P}}^2 $
be the blow--up of $ \mathbb{P}^2 $ at $10$ general points $ p_1,
\dots, p_{10}$. The linear system $ \vert 7 \pi^\ast L - 2E_1 -
\dots - 2E_{10} \vert $ on $ \widetilde{\mathbb{P}}^2 $ embeds $
\widetilde{\mathbb{P}}^2 $ in
 $ \mathbb{G}(1,3) \subset \mathbb{P}^5$. For a suitable choice of the coordinates,
 $ \mathbb{G}(1,3) = V(x_0x_5 - x_1x_4 + x_2x_3)$. Let $ \psi: \widetilde{\mathbb{P}}^2 \to \mathbb{G}(1,3) $ be such an
 embedding, and let $ K = \psi(\widetilde{\mathbb{P}}^2)$ be its image. Then, $ K $ is a line congruence of bi--degree
 $(3,6)$ and sectional genus $5$. Furthermore, the converse is true; in other words, every such surface is the embedding of $ \widetilde{\mathbb{P}}^2 $ with
 $ \vert 7 \pi^\ast L - 2E_1 - \dots - 2E_{10} \vert $. In $ \mathbb{G}(1,3)$, the ideal sheaf
 $ \mathcal{I}_{K \vert \mathbb{G}(1,3)} $ is generated in degree $3$ and it holds
 \begin{equation}
 \label{arrondo-res} 0 \to \mathcal{O}^5_{\mathbb{G}(1,3)} \to \mathcal{E}_2(1)^3 \to \mathcal{I}_{K \vert \mathbb{G}(1,3)}(3) \to 0 \end{equation} where $ \mathcal{E}_2 $ is the rank $2$ vector bundle on $ \mathbb{G}(1,3) $ coming from the universal exact sequence $$ 0 \to \mathcal{E}_1 \to H^0(\mathcal{O}_{\mathbb{P}^3}(1)) \otimes \mathcal{O}_{\mathbb{G}(1,3)} \to \mathcal{E}_2(1) \to 0.$$
Moreover the minimal free resolution of $ I_K $ is:
\begin{equation} \label{resK} 0 \to T^5(-5) \to T^{12}(-4) \to \begin{array}{c} T(-2) \\ \oplus \\ T^7(-3) \end{array} \to I_K \to 0.\end{equation}
Hence, $ K $ is ACM with Hilbert polynomial $ p_K(t) =
(9t^2+t+2)/2$. So we get the following result, similar to Proposition \ref{ell-prop}. 
\begin{proposition}
    The line congruence of bi--degree $(3,6)$ and sectional genus $ 5 $ is the general element of the irreducible component of
the Hilbert scheme $ \mathcal{H}ilb_{p_K(t)}(\mathbb{P}^5) $ containing the codimension $3$,
ACM closed subschemes of $ \mathbb{P}^5 $ with Hilbert polynomial $ p_K(t) $ and resolution (\ref{resK}).
\end{proposition}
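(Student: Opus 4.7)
The plan is to mirror the strategy used for the Bordiga surface in Proposition \ref{ell-prop} (see also Ellingsrud \cite{ellingsrud}): parametrize the codimension $3$ ACM closed subschemes of $\mathbb{P}^5$ with Hilbert polynomial $p_K(t)$ and resolution of type (\ref{resK}) by an explicit irreducible parameter space of matrices, show that this parameter space dominates an irreducible component of $\mathcal{H}ilb_{p_K(t)}(\mathbb{P}^5)$, and finally verify that $K$ itself corresponds to a general point of that component.

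Concretely, I would first introduce the space $\mathcal{M}$ of pairs of graded maps of the shape dictated by (\ref{resK}), namely a $(1+7) \times 12$ block matrix of forms (one row of degree $2$, seven rows of degree $3$) together with a $12 \times 5$ matrix of linear forms whose composition vanishes. Inside $\mathcal{M}$, the locus $\mathcal{M}^\circ$ on which the resulting complex is exact and the cokernel is the ideal sheaf of a codimension $3$ scheme with the correct Hilbert polynomial is Zariski-open. Quotienting by the natural change-of-basis action of $GL(5)\times GL(12) \times GL(1) \times GL(7)$ yields a parameter space $\mathcal{P}$ together with a rational map $\Phi : \mathcal{P} \dashrightarrow \mathcal{H}ilb_{p_K(t)}(\mathbb{P}^5)$.

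Next I would compare $\dim \mathcal{P}$ with the tangent space dimension of the Hilbert scheme at $[K]$, computed via $h^0(N_{K/\mathbb{P}^5})$. The normal bundle sequence
\[
0 \to N_{K/\mathbb{G}(1,3)} \to N_{K/\mathbb{P}^5} \to N_{\mathbb{G}(1,3)/\mathbb{P}^5}\vert_K \to 0,
\]
together with the explicit realization of $K$ as $\widetilde{\mathbb{P}}^2$ embedded by $\vert 7\pi^\ast L - 2\sum E_i \vert$, makes this computation accessible, and matching the two dimensions forces $\overline{\Phi(\mathcal{P})}$ to be a full irreducible component. Because the construction recalled before the statement fits $K$ into the sequence (\ref{arrondo-res}) on $\mathbb{G}(1,3)$, from which the resolution (\ref{resK}) is read off, the point $[K]$ lies in $\Phi(\mathcal{P})$; genericity of the ten blown-up points $p_1,\dots,p_{10}$ then translates into generic position of $[K]$ inside $\mathcal{P}$, and hence in the component.

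The main obstacle is the exactness and irreducibility step for $\mathcal{M}^\circ$. Unlike the codimension $2$ Hilbert--Burch situation that streamlines the Bordiga case, codimension $3$ ACM subschemes do not admit a single universal structure theorem, so one must show directly that a generic choice of the two differentials in (\ref{resK}) produces an exact complex whose cokernel has the prescribed Hilbert polynomial. I would attack this by combining a semicontinuity argument on the ideals of maximal minors with an explicit verification at the point corresponding to the line congruence $K$, so that the locus where the complex is a resolution is nonempty and therefore dense in an irreducible component of $\mathcal{M}$, yielding the required irreducibility of $\mathcal{P}$.
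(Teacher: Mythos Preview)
The paper does not actually prove this proposition. It is a survey, and immediately before the statement the authors write only ``So we get the following result, similar to Proposition \ref{ell-prop},'' deferring all details to \cite{bnt1} and, implicitly, to the Ellingsrud-type machinery of \cite{ellingsrud}. So there is no in-text argument to compare your proposal against; your outline is already far more detailed than anything the paper offers.

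That said, two comments on your plan are worth making. First, you are right that the codimension $3$ situation is the genuine obstacle: Ellingsrud's theorem \cite{ellingsrud} concerns codimension $2$ ACM subschemes, which is exactly why the Bordiga case (Proposition \ref{ell-prop}) goes through cleanly via Hilbert--Burch, whereas here one cannot simply quote that result. Your proposed workaround --- parametrize pairs of differentials with vanishing composition, check exactness at the point $[K]$, and use openness plus semicontinuity --- is the right shape, but the irreducibility of the relevant component of $\mathcal{M}$ (pairs of maps with zero composite) is not automatic and deserves its own argument; knowing that the exactness locus is open and nonempty only tells you it is dense in \emph{some} component, not that the component is unique or that the Hilbert-scheme image is irreducible. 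Second, the dimension match $\dim\mathcal{P}=h^0(N_{K/\mathbb{P}^5})$ is asserted but not computed; in practice the cleanest route uses the fact that $K$ sits on the smooth quadric $\mathbb{G}(1,3)$ (the degree $2$ generator in (\ref{resK}) is precisely the Pl\"ucker quadric), so the normal-bundle sequence you wrote together with (\ref{arrondo-res}) does most of the work --- but you should actually carry out that cohomology calculation rather than declare it ``accessible''.

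In short: your strategy is sound and more honest about the difficulties than the paper's one-line gesture, but the irreducibility of the parameter space and the tangent-space dimension count are the two places where real work remains.
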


\subsection{Relationship between $B$ and $K$}

Given $Z=\{p_1, \dots, p_{10}\} \subset \mathbb{P}^2$, we have seen that the blow up $\widetilde{\mathbb{P}}^2 = Bl_Z(\mathbb{P}^2) $  is embedded both in $ \mathbb{P}^4
$ as a Bordiga surface $ B$, and in $ \mathbb{G}(1,3) $ as a
suitable line congruence $ K$.  In \cite{Verra}, it is proved that the map $ \theta=|\mathcal{O}_{\mathbb{P}^4}(2)-\Gamma|,$ where $\Gamma= \phi(l)$ is a rational normal quartic, makes the following diagram commutative and is biregular between $B$ and $K.$

	$$ \begin{tikzcd}[ampersand replacement=\&]  \& Bl_Z(\mathbb{P}^2)  \arrow[dl, "\phi_{|4H-Z|}"'] \arrow[dr,"\psi_{|7H-2Z|}"] \&  \\ \mathbb{P}^4 \supset B
		\arrow[rr, "\theta"]
		\& \&  K \subset \mathbb{P}^5 \end{tikzcd} $$

Recalling that $B \subset \mathbb{P}^4$ is critical locus for the reconstruction of points with three projections from $\mathbb{P}^4$ on $\mathbb{P}^2,$ and $K \subset G(1,3) \subset \mathbb{P}^5$ is the main component of the critical locus for the reconstruction of lines with three projections from $\mathbb{P}^3$ on $\mathbb{P}^2, $ we can restore this biregular correspondence in the setting of Computer Vision.
More precisely, the two reconstruction
problems considered in the previous sections are related each
other.  Given two triples of projections
from $ \mathbb{P}^4 $ to $ \mathbb{P}^2$, and the corresponding
critical locus $ \mathcal{X} \subset \mathbb{P}^4$, it is possible
to determine two triples of projections from $ \mathbb{P}^3 $ to $
\mathbb{P}^2 $ in such a way that the critical locus for the
reconstruction problem for lines is the union of three suitable $
\alpha$--planes and the image of $\mathcal{X}$ in $
\mathbb{G}(1,3) $ via the rational map $ \theta: \mathbb{P}^4 \to
\mathbb{G}(1,3)$ quoted in the diagram above. Furthermore, also the
converse holds. 

 \section{Dropping generality assumptions}
 \label{DEGENERATE}
 The results presented in the previous sections hold under the two generality assumptions introduced in Section \ref{GTCL}. When these generality assumptions are relaxed, some degenerate configurations of the center of projections and the corresponding degenerate critical loci naturally appear. 
 
 In the specific case of three projections  from $\mathbb{P}^4 $ to $ \mathbb{P}^2$, the minimal generators of the critical locus (Bordiga surface) are cubic polynomials obtained as maximal minors of a suitable matrix of dimension $4 \times 3$ of linear forms. The assumption of genericity is reflected in the fact that those maximal minors do not have any common factors.
Therefore, one is naturally led to consider $(n +1)\times n$ matrices of linear forms, whose minors have common factors. Matrices of type $(n +1)\times n$ that drop rank in codimension $2$ have been intensively studied within the framework of liaison theory, and in the context of commutative algebra, as a generalization of the Hilbert–Burch Theorem.

Conversely, matrices of type $(n +1) \times n$ of linear forms that drop rank in codimension $1$ do not seem to have been systematically studied. In \cite{bbnt} a classification of canonical forms of such matrices, over any field, for $n \leq 3$ is conducted and is utilized to study degenerate critical loci for suitable projections from $ \mathbb{P}^3$. In particular, the authors revisit the classical case of two projections from $\mathbb{P}^3 $ to $ \mathbb{P}^2$, refining results obtained originally by Hartley and Kahl in \cite{Hart-Ka} (see also \cite{BRAT} and \cite{BRAT3}).

 \section {Unified critical loci}
 \label{UNIFIED}
 
 For all the results of this sections refers to \cite{BNT-Fam1}.

 The definition of critical loci, as quoted and utilized in the preceding sections, focuses on the points ${\mathbf{X}}$ in $\mathbb{P}^k$ that cannot be reconstructed from their projections. Indeed, in the formulation of Definition \ref{critconfm}, another set of points ${\mathbf{Y}}$ appears, which is associated with the points ${\mathbf{X}}$ to determine their criticality. From a theoretical point of view, to highlight the symmetry between the two configurations of points ${\mathbf{X}}$ and ${\mathbf{Y}}$, in \cite{BNT-Fam1} the authors slightly modify Definition \ref{critconfm} as follows:
 
 \begin{definition} \label{critconfm2}
Given two sets of $ n $ projections $ Q_j, P_j: \mathbb{P}^k \dashrightarrow \mathbb{P}^{h_j}$,$j=1, \dots, n$,
two sets of points $\{ \mathbf{X}_1, \dots, \mathbf{X}_N\}$ and $\{ \mathbf{Y}_1, \dots, \mathbf{Y}_N\}$, $ N \gg 0$,
in $\mathbb{P}^{k}$, are said to be \textit{conjugate critical
configurations}, associated to the projections
$\{Q_1, \dots, Q_n\}$ and $\{P_1, \dots, P_n\}$ if, for all $i = 1, \dots, N$ and $j = 1,
\dots, n$, we have $Q_j(\mathbf{X}_i) = P_j(\mathbf{Y}_i)$.
\end{definition}

Notice that in Definition \ref{critconfm2}, we have symmetry between the projections $ Q_1, \dots, Q_n $ and $ P_1, \dots, P_n $ and from \ref{critconfm2}, it is evident that we can get a symmetric definition for the critical loci as well:

\begin{definition} \label{critical-loci2}
Given two sets of $ n $ projections $ Q_j, P_j : \mathbb{P}^k \dashrightarrow \mathbb{P}^{h_j}$, $j=1, \dots, n$, as above, the locus $ \mathcal{X} \subseteq \mathbb{P}^k $ ($ \mathcal{Y} \subseteq \mathbb{P}^k$, respectively) containing all possible critical configurations $ \{ \mathbf{X}_1, \dots, \mathbf{X}_N \} $ ($ \{ \mathbf{Y}_1, \dots, \mathbf{Y}_N \}$, respectively) is called \textit{critical locus} for the associated projections. Finally, we have the \textit{unified critical locus} $ \mathcal{U} \subseteq \mathbb{P}^k \times \mathbb{P}^k $ defined as the locus containing all the pairs $ (\mathbf{X}_i, \mathbf{Y}_i) $ of corresponding conjugate critical points.
\end{definition}

The motivation for introducing the unified critical locus $\mathcal{U}$ is to restore symmetry between the two sets of projections.  Such a symmetry is hidden when considering only the critical locus $ \mathcal{X}$ or $ \mathcal{Y}$. 

Generalising the construction of the ideal  $I(\mathcal{X}) $ (and $I(\mathcal{Y})$ by exchanging the projection matrices) as outlined in Section \ref{GTCL}, through the use of Grassmann tensors, we get the ideal of the unified critical locus $ \mathcal{U}$. Indeed, from its definition, it follows that $ \mathcal{U} $ is an algebraic variety in $ \mathbb{P}^k \times \mathbb{P}^k$.
Let us denote by $ M_{\mathcal{X}}$ and $ M_{\mathcal{Y}}$ the analogous of matrix (\ref{matrix-M}) for $\mathcal{X}$ and $\mathcal{Y}$ respectively.
\begin{proposition} \label{prop-id-unif-crit-locus}
The defining ideal $ I(\mathcal{U}) $ is bi--homogeneous generated by the maximal minors of $ M_{\mathcal{X}}$, $ M_{\mathcal{Y}} $ and by the $ 2 \times 2 $ minors of the matrices $$ \left( \begin{array}{ccc} P_j(\mathbf{Y}) & \vert & Q_j(\mathbf{X}) \end{array} \right), \quad j=1, \dots, n.$$
\end{proposition}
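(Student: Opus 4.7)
The plan is to prove the ideal equality by set-theoretic double inclusion, combined with an argument that the generators already cut out $\mathcal{U}$ with its correct (radical, bi-homogeneous) structure. The underlying idea is simple: the condition for $(\mathbf{X},\mathbf{Y})$ to be a conjugate critical pair in the sense of Definition \ref{critconfm2} decomposes into three pieces — criticality of $\mathbf{X}$ alone, criticality of $\mathbf{Y}$ alone, and the projective coincidence $Q_j(\mathbf{X})\sim P_j(\mathbf{Y})$ for each $j$ — and each piece corresponds to one of the three sets of generators.

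The first step is to check that every proposed generator vanishes on $\mathcal{U}$. For any $(\mathbf{X},\mathbf{Y})\in\mathcal{U}$ Definition \ref{critconfm2} gives scalars $\mu_j$ with $Q_j(\mathbf{X})=\mu_j P_j(\mathbf{Y})$, so the two columns of $\bigl(\,P_j(\mathbf{Y})\mid Q_j(\mathbf{X})\,\bigr)$ are proportional and all of its $2\times 2$ minors vanish. Moreover the projection of $\mathcal{U}$ onto the first factor is contained in $\mathcal{X}$ (respectively, the second factor in $\mathcal{Y}$) because the existence of a conjugate $\mathbf{Y}$ is exactly the critical configuration condition; by Proposition \ref{prop-2-1} the ideal of $\mathcal{X}$ is generated by the maximal minors of $M_{\mathcal{X}}$, and symmetrically for $\mathcal{Y}$. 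Hence these minors vanish on $\mathcal{U}$ as well.

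For the reverse inclusion, I would take a bi-homogeneous point $(\mathbf{X}_0,\mathbf{Y}_0)$ at which all generators vanish and recover the data of Definition \ref{critconfm2}. The vanishing of the $2\times 2$ minors of $\bigl(P_j(\mathbf{Y}_0)\mid Q_j(\mathbf{X}_0)\bigr)$ forces the two columns to be linearly dependent, hence (whenever at least one column is nonzero) projectively equal. In the degenerate strata where some $Q_j(\mathbf{X}_0)$ or $P_j(\mathbf{Y}_0)$ vanishes — so $\mathbf{X}_0$ or $\mathbf{Y}_0$ lies on a center of projection — the additional vanishing of the maximal minors of $M_{\mathcal{X}}$ and $M_{\mathcal{Y}}$ places $\mathbf{X}_0$ in $\mathcal{X}$ and $\mathbf{Y}_0$ in $\mathcal{Y}$, exactly the configurations where Proposition \ref{prop-2-1} already identified the projection centers as critical. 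Packaging the $\mu_j$'s from the proportionality relations produces conjugate projections witnessing that $(\mathbf{X}_0,\mathbf{Y}_0)\in\mathcal{U}$.

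The main obstacle is upgrading from set-theoretic coincidence $V(J)=\mathcal{U}$, where $J$ is the ideal generated by the listed minors, to the ideal-theoretic statement $J=I(\mathcal{U})$. One can handle this by reducing $J$ to the ideal of maximal minors of a combined matrix of bi-linear and linear forms, analogous to the reduction carried out in the proof of Proposition \ref{N-mat-cor}, and then invoking the standard fact that ideals of maximal minors of matrices of linear forms of the expected codimension are perfect and radical (Eagon--Northcott type results). The same elimination that produced $N_{\mathcal{X}}$ in Proposition \ref{N-mat-cor} applied separately to $\mathbf{X}$ and $\mathbf{Y}$ variables, together with the rank-one blocks $\bigl(P_j(\mathbf{Y})\mid Q_j(\mathbf{X})\bigr)$, is the technical heart of the argument.
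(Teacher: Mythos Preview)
The paper itself does not prove this proposition; it is stated as a result from the referenced preprint \cite{BNT-Fam1}, so there is no in--paper argument to compare against. I can only assess your attempt on its own terms.

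Your set--theoretic double inclusion is sound. For the forward direction you correctly observe that a conjugate pair $(\mathbf{X},\mathbf{Y})$ lands in $\mathcal{X}\times\mathcal{Y}$ and satisfies $Q_j(\mathbf{X})\sim P_j(\mathbf{Y})$. For the reverse direction you are right that the $2\times2$ minors force proportionality away from the centers; in fact, once $P_j(\mathbf{Y}_0)=\lambda_j Q_j(\mathbf{X}_0)$ holds for all $j$ with $Q_j(\mathbf{X}_0)\neq 0$, the vector $(\mathbf{Y}_0,-\lambda_1,\dots,-\lambda_n)$ lies in the kernel of $M_{\mathcal{X}}$ evaluated at $\mathbf{X}_0$, so the maximal minors of $M_{\mathcal{X}}$ vanish automatically there. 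Thus the minors of $M_{\mathcal{X}}$ and $M_{\mathcal{Y}}$ are needed only to control the behaviour at, and the Zariski closure through, the centers --- a point you handle correctly.

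The genuine gap is your final paragraph. You propose to assemble all three families of minors as the maximal minors of a single matrix of (bi)linear forms and then invoke Eagon--Northcott to conclude radicality. But these generators do not arise that way: the $2\times2$ minors come from $n$ separate two--column matrices in mixed variables, while the minors of $M_{\mathcal{X}}$ and $M_{\mathcal{Y}}$ involve disjoint sets of variables and different matrix shapes. There is no evident single matrix whose maximal--minor ideal equals $J$, and Eagon--Northcott radicality requires precisely that single--matrix, expected--codimension hypothesis. So as written, the upgrade from $V(J)=\mathcal{U}$ to $J=I(\mathcal{U})$ is not justified. If the intended statement is set--theoretic (as the survey phrasing and the definition of $\mathcal{U}$ as a ``locus'' strongly suggest), your first two steps already suffice; if it is genuinely ideal--theoretic, a different argument for radicality is needed, not the one you sketch.
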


The unified critical locus $ \mathcal{U} $ is equipped with projections to the critical loci $ \mathcal{X}, \mathcal{Y}$, and so we have the following diagram
    \begin{equation}\label{main-cd} \begin{tikzcd}  & \mathcal{U} \ar[dl, "\pi_1"'] \ar[dr, "\pi_2"] & \\ \mathcal{X} & & \mathcal{Y} \end{tikzcd}. \end{equation}

If we consider the projections $ \pi_1 $ and $ \pi_2$, and the induced maps $ \pi_2 \circ \pi_1^{-1}: \pi_1(\mathcal{U}) \dashrightarrow \pi_2(\mathcal{U}) $ and $ \pi_1 \circ \pi_2^{-1}: \pi_2(\mathcal{U}) \dashrightarrow \pi_1(\mathcal{U})$, one might assume that $ \pi_1 $ and $ \pi_2 $ are dominant maps, and that $ \pi_2 \circ \pi_1^{-1}$, $ \pi_1 \circ \pi_2^{-1} $ are birational maps. On the contrary, this guess  does not always hold as shown in \cite{BNT-Fam1} with some examples. 
\begin{itemize}
\item[1.] The first example we produce concerns the classical setting of two pairs of projections from $ \mathbb{P}^3 $ to $ \mathbb{P}^2$. The twin critical loci $\mathcal{X}$ and $\mathcal{Y}$ are quadric surfaces. In this case, $ \pi_1 $ and $ \pi_2 $ are surjective, and $ \pi_2 \circ \pi_1^{-1} $ is a birational map. 
\item[2.] In a second example, we consider a couple of three projections from $ \mathbb{P}^3$, the first two to $ \mathbb{P}^2$  and the third to $ \mathbb{P}^1$. The twin critical loci $\mathcal{X}$ and $\mathcal{Y}$ are determinantal, with dimension 1 and degree $6$ in $\mathbb{P}^3.$ The unified critical locus $ \mathcal{U} \subseteq \mathbb{P}^3 \times \mathbb{P}^3 $ has dimension $ 1$ and degree $5$. In this case it possible to show that maps $ \pi_i $ are not dominant.
\item[3.] In a third example, concerning the construction of a singular Bordiga surface as degeneration of the critical loci for three projections from $\mathbb{P}^4$ to $\mathbb{P}^2$, we show that not every irreducible component of $ \mathcal{X} $ is necessarily birational to at least an irreducible component of $ \mathcal{Y}$.
\end{itemize}

Leveraging on these examples as well, we see that the description of the images  $\pi_1(\mathcal{U})$ and $\pi_2(\mathcal{U})$ as subvarieties of  $\mathcal{X}$ and $\mathcal{Y}$ can be quite intricate.

We can summarize the results on these maps with the help of the following diagram: 

$$ \begin{tikzcd}[ampersand replacement=\&]  \& \mathcal{U} \arrow[dl, "\pi_1"'] \arrow[dr,"\pi_2"] \&  \\ \mathcal{X} \supseteq \pi_1(\mathcal{U}) \arrow[rr, dashrightarrow, "\pi_2 \circ \pi_1^{-1}"] \& \& \pi_2(\mathcal{U}) \subseteq \mathcal{Y} \end{tikzcd}. $$
\bigskip	
	
\begin{itemize} 
	
	\item [-] in the general case, $\mathcal{X}-\{C_{Q_i}\} \subseteq \pi_1(U) \subseteq \mathcal{X}$, where $\{C_{Q_i}\}$ are the centers of the projection matrices $Q_i$.
		
	\item [-] if $ (\mathbf{X_0}, \mathbf{Y_0}) \in \mathcal{U}$ is a point such that $ \mathbf{X_0} $ is smooth on an irreducible component $ \mathcal{X}'$ of $ \mathcal{X}$, and $ \mathbf{Y_0} $ is smooth on an irreducible component $ \mathcal{Y}'$ of $ \mathcal{Y}$, then the map: $$ \pi_2 \circ \pi_1^{-1}: \mathcal{X}' \dashrightarrow \mathcal{Y}'$$ is a birational map.
		\end{itemize}
		
\section {Instability Phenomena}
\label{INSTABILITY}

From a realistic and practical point of view, it is unlikely that all the points of a scene or the centers of the cameras under consideration lie on a critical configuration. Therefore, one might draw the conclusion that critical sets have no effect in real life.

On the contrary, as highlighted in \cite{Hart-Ka} and \cite{tubbCHAPTER}, understanding the behavior of reconstruction for configurations of points near the critical locus is of practical relevance. In such circumstances, reconstruction solutions become extremely unstable, meaning that small perturbations in the data can lead to drastic changes in the output.
We conducted simulated experiments to illustrate this phenomenon, exploring various contexts and situations, in the case of one view, where we  conducted this experiment for the twisted cubic or the smooth, two-dimensional cubic scroll and in the case of multiple views, when the critical locus is a Bordiga surface (\cite{tubbCHAPTER}),  or an hypersurface (\cite{beMagri}) or a degenerate configurations (\cite{bbnt}).

A unified approach, in the case of $n \geq 2$ views, is outlined below from a theoretical point of view, was employed in these experiments:

\begin{itemize}
\item[1]{\it Generation of Critical Configurations} \\
Given two sets of projection matrices $\{P_i\}$ and $\{Q_i\}$, of the appropriate type, equations of the critical locus can be obtained (for example with the help of Maple). Equations are then solved to retrieve a suitable number of critical points $\{\mathbf{X_{j}}\}$ in $\Pin{k}.$ 

\

\item[2]{\it Perturbation of critical configurations}\\
Points $\{\mathbf{X_{j}}\}$ are then perturbed with a $k$-dimensional noise,
normally distributed, with zero mean, and with assigned standard
deviation $\sigma$, obtaining a new configuration
$\{\nbXi^{pert}\},$ which is close to being critical. This
configuration is projected via the original projection matrices $P_i$. 
The resulting images $\mathbf{x_{ij}} =
Pj\nbXi^{pert}$ are again perturbed with normally distributed
${h_i}$-dimensional noise with zero mean and fixed standard deviation to
obtain $\{\mathbf{x_{ij}}^{pert}\}.$

\

 \item[3]{\it Reconstruction} \\
The multifocal focal tensor corresponding to the true
 reconstruction, $T_P$ is computed from the projection matrices $P_i.$ using an algorithm implemented in Matlab.
 An estimated multifocal tensor $T$ is computed from
 $\{\mathbf{x_{ij}}^{pert}\}$ using a reconstruction algorithm implemented in Matlab as well.

\

\item[4]{\it Estimating instability}\\
As multifocal tensors are defined up to multiplication by a
non-zero constant, $T_P$ and $T$ are normalized. Using a suitable notion of distance, it is possible to estimate whether $T$ is close to $T_P,$ or
not. The above procedure is then
repeated many times for every fixed value of $\sigma.$

\

\item[5] {\it Results} \\
Results are then plotted in figures showing the frequency with which the reconstructed solution is close or far from the true solution $T_P,$ against the values of $\sigma$ utilized.
The results confirm that the larger the value of $\sigma,$ the stabler the solution gets, with standard deviation approaching zero.
\end{itemize}

\end{document}